\renewcommand{\mathbb}{\mathds}
\DeclareMathAlphabet{\mathsc}{U}{rsfs}{m}{n}
\theoremstyle{plain}
\newtheorem{thm}{Theorem}[section]
\newtheorem{cor}[thm]{Corollary} 
\newtheorem{prp}[thm]{Proposition} 
\newtheorem{lem}[thm]{Lemma} 
\theoremstyle{definition}
\newtheorem{dfn}[thm]{Definition}
\theoremstyle{remark} 
\newtheorem{rmk}[thm]{Remark}
\newtheorem{ntn}[thm]{Notation}
\newtheorem{exa}[thm]{Example}
\newcommand{\ol}[1]{{\overline{#1}}}
\newcommand{\ul}[1]{{\underline{#1}}}
\newcommand{\oul}[1]{{\overline{\underline{#1}}}}
\newcommand{\abs}[1]{{\left\vert#1\right\vert}}
\newcommand{\fs}[1]{{\llbracket#1\rrbracket}}
\newcommand{\ideal}[1]{{\left\langle#1\right\rangle}}
\newcommand{\set}[1]{{\left\{#1\right\}}}
\newcommand{\onto}{\twoheadrightarrow}
\newcommand{\xmid}{\;\middle|\;}
\newcommand{\CC}{\mathbb{C}}
\newcommand{\KK}{\mathbb{K}}
\newcommand{\MM}{\mathbb{M}}
\newcommand{\NN}{\mathbb{N}}
\newcommand{\ZZ}{\mathbb{Z}}
\newcommand{\mm}{\mathfrak{m}}
\newcommand{\pp}{\mathfrak{p}}
\DeclareMathOperator{\ann}{ann}
\DeclareMathOperator{\Ass}{Ass}
\DeclareMathOperator{\End}{End}
\DeclareMathOperator{\grade}{grade}
\DeclareMathOperator{\gr}{gr}
\DeclareMathOperator{\Hom}{Hom}
\DeclareMathOperator{\intHom}{{^*\!}\Hom}
\DeclareMathOperator{\ord}{ord}
\DeclareMathOperator{\soc}{soc}
\DeclareMathOperator{\socdeg}{socdeg}
\DeclareMathOperator{\type}{type}
\begin{document}

\title[Inverse limits of Macaulay's inverse systems]{Inverse limits of\\Macaulay's inverse systems}

\author[M.~Schulze]{Mathias Schulze}
\address{M.~Schulze\\
Department of Mathematics\\
TU Kaiserslautern\\
67663 Kaiserslautern\\
Germany}
\email{\href{mailto:mschulze@mathematik.uni-kl.de}{mschulze@mathematik.uni-kl.de}}

\author[L.~Tozzo]{Laura Tozzo}
\address{L.~Tozzo\\
Department of Mathematics\\
TU Kaiserslautern\\
67663 Kaiserslautern\\
Germany}
\email{\href{mailto:tozzo@mathematik.uni-kl.de}{tozzo@mathematik.uni-kl.de}}


\subjclass[2010]{Primary 16D90; Secondary 13A30, 13H10}





\keywords{Macaulay inverse system, Matlis duality, Rees isomorphism}

\begin{abstract}
Generalizing a result of Masuti and the second author, we describe inverse limits of Macaulay's inverse systems for Cohen--Macaulay factor algebras of formal power series or polynomial rings over an infinite field.
On the way we find a strictness result for filtrations defined by regular sequences. 
It generalizes both a lemma of Uli Walther and the Rees isomorphism.
\end{abstract}

\maketitle


\numberwithin{equation}{section}

\section*{Introduction}

Let $\KK$ be a field, and let $P$ be either the (standard graded) polynomial ring $\KK[x_1,\dots,x_n]$ or the formal power series ring $\KK\fs{x_1,\dots,x_n}$ (with trivial grading).
The injective hull $E$ of $\KK$ over $P_0$ defines a duality $-^\vee=\intHom_{P_0}(-,E)$ between Artinian and finitely generated (graded) $P$-modules.
In particular, this yields an antiisomorphism $-^\perp$ of the lattices of (graded) ideals $I\lhd P$ and (graded) $P$-submodules $W$ of $D=P^\vee$ (see \eqref{39}).
The ideals $I$ for which $P/I$ is Artinian correspond to finitely generated submodules $W=I^\perp$.
In this case and for the polynomial ring the correspondence was proved by Macaulay at the beginning of the 20th century.
The dual $I^{\perp}$ is known as the inverse system of $I$ (see \cite{Mac16}). 

Around 1960 Macaulay's correspondence turned out to be a special case of  Matlis duality (see \cite{Mat58,Gab60}).
Later it was rediscovered and further developed (see for instance \cite{Ems78,Iar94,Ge96,GS98,Kle07,CI12}).
Recent applications concern 
the $n$-factorial conjecture (see \cite{Ha94}), 
Waring's problem (see \cite{Ge96}), 
the geometry of the punctual Hilbert scheme of Gorenstein schemes (see \cite{IK99}),
the analytic classification of Artinian Gorenstein rings (see \cite{ER12}),  the cactus rank (see \cite{RS13}), and 
the Kaplansky--Serre problem (see \cite{RS14}).

Elias and Rossi (see \cite{ER17}) described the first generalization of Macaulay's inverse system in the positive-dimensional case. 
Their result which applies to Gorenstein algebras was extended by Masuti and the second author (see \cite{MT18}) to the case of level algebras.
We give a more conceptual description of their construction in terms of inverse limits.
We show how to drop the level-hypothesis by identifying the various socles in the inverse system (see Corollary~\ref{14}).
This fact is deduced from a general strictness result for filtrations defined by regular sequences, which generalizes both a lemma of Walther (see \cite[Lem.~6.5]{Wal17}) and the Rees isomorphism.
The full generality of this result is not needed for our application.

Our main result (see Theorem~\ref{25}) gives an explicit description of inverse limits of Macaulay's inverse systems obtained by dividing out powers of a linear regular sequence.
It applies to (graded) Cohen--Macaulay factor algebras of formal power series (or polynomial) rings over an infinite field (see Lemma~\ref{24} for a more intrinsic description of these types of algebras).

\section{Strict filtrations by regular sequences}\label{33}

We underline vectors and denote (component-wise) residue classes by an overline.
We apply maps component-wise to vectors.
All rings considered are commutative unitary.
We use the ideal symbol $\lhd$.
By an \emph{$R$-sequence} we mean a regular sequence in $R$.


Let $R$ be a ring.
Any ideal $I\lhd R$ defines an \emph{exhaustive decreasing filtration}
\[
R=I^0\supset I\supset I^2\supset I^3\supset\cdots
\]
on $R$ denoted by $I^\bullet$.
It is called \emph{separated} if $\bigcap_{k\in\NN}I^k=0$.
The $I$-order of $p\in R$ is 
\[
\ord_I(p)=\max\set{k\in\NN\xmid p\in I^k}\in\NN\cup\set{\infty}.
\]
We abbreviate $R_I:=R/I$.
The \emph{associated $I$-graded ring} 
\[
\gr_IR=\bigoplus_{l\in\NN}I^l/I^{l+1}
\]
is a homogeneous graded $R_I$-algebra.
There is an \emph{$I$-symbol map}
\[
\sigma_I\colon R\setminus\bigcap_{k\in\NN}I^k\to\gr_IR,\quad 
p\mapsto\ol p\in\gr_I^{\ord_I(p)}R=I^{\ord_I(p)}/I^{\ord_I(p)+1}.
\]
Any ideal $J\lhd R$ induces a filtration $\gr_IJ^\bullet$ on $\gr_IR$, where
\begin{align}\label{16}
\gr_I^lJ^k&=(J^k\cap I^l+I^{l+1})/I^{l+1}\subset\gr_I^lR,\\
\gr_IJ^k&=\bigoplus_{l\in\NN}\gr_I^lJ^k\subset\gr_IR.\nonumber
\end{align}
We refer to any filtration induced by $J$ as a \emph{$J$-filtration}.
If $J$ is generated by $\ul f=f_1,\dots,f_r\in R$, then we use $\ul f$ as a shortcut for the $J$-filtration.


\begin{rmk}\label{18}
Let $\sigma_\ul g(p),\sigma_\ul g(q)\in\gr_\ul g R$.
Then
\[
\ord_\ul g(pq)\ge\ord_\ul g p+\ord_\ul g q,
\]
with equality equivalent to $pq\not\in I^{\ord_\ul g p+\ord_\ul g q+1}$.
It follows that
\[
\sigma_\ul g(p)\sigma_\ul g(q)=
\begin{cases}
\sigma_\ul g(pq) & \text{ if }\ord_\ul g(pq)=\ord_\ul g p+\ord_\ul g q,\\
0 & \text{ otherwise}.
\end{cases}
\]
\end{rmk}


Let $\ul g=(g_1,\dots,g_s)\in R^s$ and denote by $\ul Y=(Y_1,\dots,Y_s)$ corresponding indeterminates of degree $1$.

\begin{thm}[Rees]\label{30}
The Rees map of graded $R_\ul g$-algebras
\begin{equation}\label{15}
\xymatrix@R=0em{
\llap{$\varphi_\ul g\colon$}R_\ul g[\ul Y]\ar[r] & \gr_\ul g R,\\
Y_i\ar@{|->}[r] & \sigma_\ul g(g_i),
}
\end{equation}
is an isomorphism if $\ul g$ is an $R$-sequence (see \cite[Thm.~1.1.8]{BH93}).\qed
\end{thm}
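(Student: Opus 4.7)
The map $\varphi_\ul g$ is well-defined by degree considerations, since each $\sigma_\ul g(g_i)$ lies in $\gr^1_\ul g R$ matching $\deg Y_i = 1$, and the degree-zero component of $\gr_\ul g R$ equals $R_\ul g$. Surjectivity is immediate, because $\gr_\ul g R$ is generated in degree one by the classes of $g_1,\dots,g_s$, so any monomial $g_1^{\alpha_1}\cdots g_s^{\alpha_s}$ of total degree $k$ is the image of $Y^\alpha$ in $J^k/J^{k+1}$, where $J=(\ul g)$.

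The content is injectivity, which amounts to the claim that if $F\in R[\ul Y]$ is homogeneous of degree $k$ with $F(\ul g)\in J^{k+1}$, then every coefficient of $F$ lies in $J$. My plan is a nested induction, outer on $s$ and inner on $k$. The base case $s=1$ is direct: from $a g_1^k=b g_1^{k+1}$ one extracts $(a-b g_1)g_1^k=0$, and regularity of $g_1$ forces $a\in(g_1)$.

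For the inductive step, set $J'=(g_1,\dots,g_{s-1})$ and split $F=F_0(Y_1,\dots,Y_{s-1})+Y_s G$ with $\deg F_0=k$ and $\deg G=k-1$. Reducing modulo $g_s$ keeps the shortened sequence regular (in the local or graded setting relevant to the intended applications), so the outer induction applied to $F_0$ places each of its coefficients in $J'+(g_s)=J$. Then $F_0(\ul g)\in J^{k+1}$, leaving $g_s G(\ul g)\in J^{k+1}$; if this forces $G(\ul g)\in J^k$, the inner induction on $k$ applied to $G$ closes the argument.

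The principal obstacle is thus the transversality $(J^{k+1}:g_s)\subseteq J^k$. I would reduce it to the auxiliary statement that $g_s$ is a non-zerodivisor on $R/(J')^m$ for every $m\geq 1$. Given $g_s a\in J^{k+1}$, expand $g_s a=\sum_{i=0}^{k+1}g_s^i h_i$ with $h_i\in(J')^{k+1-i}$ and put $c=a-\sum_{i=1}^{k+1}g_s^{i-1}h_i$; then $g_s c=h_0\in(J')^{k+1}$, so the auxiliary statement delivers $c\in(J')^{k+1}\subset J^k$, while each tail term $g_s^{i-1}h_i$ already lies in $J^k$, whence $a\in J^k$. The auxiliary statement itself is proved by induction on $m$ via the short exact sequences $0\to(J')^{m-1}/(J')^m\to R/(J')^m\to R/(J')^{m-1}\to 0$: $g_s$ is regular on $R/J'$ by the regular sequence hypothesis, and regular on each graded piece because the outer induction (Rees for $g_1,\dots,g_{s-1}$) identifies $(J')^{m-1}/(J')^m$ with a free $R/J'$-module, on which $g_s$ acts as a non-zerodivisor.
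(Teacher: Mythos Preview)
The paper does not supply its own proof of this statement; it simply cites \cite[Thm.~1.1.8]{BH93}. Your overall architecture matches that standard reference: the inner induction on $k$, the colon computation $(J^{k+1}:g_s)\subset J^k$, and the auxiliary claim that $g_s$ is regular on $R/(J')^m$ for all $m$ (proved by feeding the outer induction for the initial segment $g_1,\dots,g_{s-1}$ into the short exact sequences) are exactly right and need no hypotheses beyond regularity of $\ul g$.

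The one genuine gap is your treatment of $F_0$. Reducing modulo $g_s$ and invoking the outer induction for $\bar g_1,\dots,\bar g_{s-1}$ in $R/(g_s)$ requires the \emph{permuted} sequence $g_s,g_1,\dots,g_{s-1}$ to be regular. Your parenthetical hedge (``in the local or graded setting'') is honest, but the theorem is stated for an arbitrary ring and is used in that generality (Proposition~\ref{2} applies it before any \textsuperscript{*}local hypothesis is imposed). The fix is to reverse the order of your two steps and never leave $R$. First reduce to $F(\ul g)=0$ by subtracting a degree-$k$ form with coefficients in $J$. Then $g_sG(\ul g)=-F_0(g_1,\dots,g_{s-1})\in(J')^k$ automatically, so your auxiliary already gives $G(\ul g)\in(J')^k\subset J^k$ and the inner induction disposes of $G$. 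For $F_0$, write $G(\ul g)=P(g_1,\dots,g_{s-1})$ with $P\in R[Y_1,\dots,Y_{s-1}]_k$; then $(F_0+g_sP)(g_1,\dots,g_{s-1})=0$, and the outer induction for the \emph{initial} segment $g_1,\dots,g_{s-1}$ in $R$---regular by hypothesis, no permutation needed---places the coefficients of $F_0+g_sP$ in $J'$, hence those of $F_0$ in $J'+(g_s)=J$.
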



\begin{rmk}\label{19}
If $P\in R[\ul Y]_l$ such that $\ord_\ul g(P(\ul g))=l$, then, using Remark~\ref{18},
\[
\sigma_\ul g(P(\ul g))=\ol P(\sigma_\ul g(\ul g))=\varphi_\ul g\bigl(\ol P\bigr),
\]
where $P\mapsto\ol P$ under $R[\ul Y]\to R_\ul g[\ul Y]$.
\end{rmk}


\begin{rmk}\label{20}
If \eqref{15} is an isomorphism, then the components of $\sigma_\ul g(\ul g)$ are regular on $\gr_\ul g R$.
With Remark~\ref{18}, it follows that  
\[
\sigma_\ul g(p\ul g^\ul m)=
\sigma_\ul g(p)\sigma_\ul g(\ul g^\ul m)=
\sigma_\ul g(p)\sigma_\ul g(\ul g)^\ul m,
\]
for all $\sigma_\ul g(p)\in\gr_\ul g R$ and $\ul m\in\NN^s$.
By Theorem~\ref{30}, this holds if $\ul g$ is an $R$-sequence.
\end{rmk}


Let $\ul f=(f_1,\dots,f_r)\in R^r$, and set
\[
\ul h=(h_1,\dots,h_t)=\ul f,\ul g\in R^r\times R^s=R^t.
\]
Denote by $\ul X=(X_1,\dots,X_r)$ indeterminates of degree $1$ corresponding to $\ul f$, and set 
\[
\ul Z=(Z_1,\dots,Z_t)=\ul X,\ul Y.
\]
For $i\in\set{1,\dots,u}$, let
\[
0\ne\ul m_i=(\ul k_i,\ul l_i)\in\NN^r\times\NN^s=\NN^t
\]
be the rows of a matrix 
\begin{equation}\label{13}
M=(K L).
\end{equation}
We denote by $\ul h^M,\ul f^K,\ul g^L\in R^u$ the vectors with respective entries $\ul h^{\ul m_i},\ul f^{\ul k_i},\ul g^{\ul l_i}\in R$.
Consider the $R$-linear map 
\[
\xymatrix@R=0em{
\llap{$\ul h^M\colon$}R^u\ar[r] & R,\\
\ul e_i\ar@{|->}[r] & \ul h^{\ul m_i},
}
\]
with image $\ideal{\ul h^M}$.
Assigning degrees $\deg\ul e_i=\abs{\ul l_i}$ to the generators defines a $\ul g$-filtration $\bigoplus_{i=1}^u\ideal{\ul g}^{\bullet-l_i}$ on $R^u$ and turns $\ul h^M$ into a filtered map.
It fits into a commutative diagram of filtered $R$-linear maps
\[
\xymatrix@C=1em{
\bigl(R^u,\bigoplus_{i=1}^u\ideal{\ul g}^{\bullet-l_i}\bigr)\ar[r]^-{\ul h^M}\ar[d]^-{\ul h^M} & \bigl(R,\ideal{\ul g}^\bullet\bigr) \\
\bigl(\ideal{\ul h^M},\sum_{i=1}^u\ideal{\ul g}^{\bullet-l_i}\ul h^{\ul m_i}\bigr)\ar[r] & \bigl(\ideal{\ul h^M},\ideal{\ul g}^\bullet\cap\ideal{\ul h^M}\bigr).\ar[u]
}
\]
The bottom map is the identity of $\ideal{\ul h^M}$, but its source and target carry respectively the image and preimage filtration from the source and target of the (horizontal) map $\ul h^M$.
If it identifies the two filtrations, then $\ul h^M$ is said to be $\ul g$-strict.
The vertical maps are $\ul g$-strict by construction.


The following proposition gives a generalized Rees isomorphism.


\begin{prp}\label{2}
Suppose that both $\ul h=\ul f,\ul g$ and $\ul g,\ul f$ are $R$-sequences, and that the $\oul f$-filtration on $R_\ul g$ is separated and complete.
Then $\ul h^M$ is $\ul g$-strict.
In particular, the Rees map \eqref{15} induces an isomorphism of graded $R_\ul g$-algebras
\[
\xymatrix@R=0em{
R_\ul g[\ul Y]/\ideal{\oul f^K\ul Y^L}\ar[r]^-{\ol{\varphi_\ul g}}_-\cong & \gr_\ul g(R)/\ideal{\sigma_\ul g(\ul h^M)}\ar[r]_-\cong & \gr_{\oul g}\bigl(R/\ideal{\ul h^M}\bigr),\\
& \ol{\sigma_\ul g(x)}\ar@{<->}[r] & \sigma_{\oul g}(\ol x),
}
\]
where $\oul f^K\ul Y^L$ denotes the vector with entries $\oul f^{\ul k_i}\ul Y^{\ul l_i}$.
\end{prp}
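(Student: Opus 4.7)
My plan is to reduce the isomorphism statement to the $\ul g$-strictness of $\ul h^M$, and then to prove strictness by iteratively improving representations.

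Granted strictness, the kernel of the natural surjection $\gr_\ul g R \onto \gr_{\oul g}(R/\ideal{\ul h^M})$ coincides with $\gr_\ul g\ideal{\ul h^M} = \ideal{\sigma_\ul g(\ul h^M)}$, which under $\varphi_\ul g$ corresponds to $\ideal{\oul f^K \ul Y^L}$; this delivers the advertised chain of isomorphisms. For the identification $\sigma_\ul g(\ul h^{\ul m_i}) = \oul f^{\ul k_i}\ul Y^{\ul l_i}$ to hold in the first place, Remark~\ref{20} needs $\oul f^{\ul k_i}$ to avoid killing the $\sigma_\ul g$-class of $\ul g^{\ul l_i}$, i.e.\ to be a nonzerodivisor on $R_\ul g$. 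The separatedness of the $\oul f$-filtration on $R_\ul g$ guarantees this via the standard argument that monomials in a regular sequence are regular once the associated filtration is separated.

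To prove strictness, i.e.\ $\ideal{\ul h^M}\cap\ideal{\ul g}^k\subseteq\sum_i\ideal{\ul g}^{k-l_i}\ul h^{\ul m_i}$, I would start from any representation $p = \sum_i a_i\ul h^{\ul m_i}$ and iteratively raise the weighted order $n := \min_i(\ord_\ul g(a_i) + l_i)$ up to $k$. At each step with $n < k$, the inclusion $p \in \ideal{\ul g}^{n+1}$ forces the leading form of $p$ in $\gr_\ul g^n R$ to vanish, so via $\varphi_\ul g$ one obtains a homogeneous relation
\[
\textstyle\sum_i [a_i]_{n-l_i}\cdot\oul f^{\ul k_i}\ul Y^{\ul l_i} = 0 \qquad \text{in }R_\ul g[\ul Y]_n.
\]
The plan is to lift this to an exact syzygy $\sum_i \Delta_i \ul h^{\ul m_i} = 0$ in $R$ with $[\Delta_i]_{n-l_i} = [a_i]_{n-l_i}$; the substitution $a_i \leftarrow a_i - \Delta_i$ will then preserve $p = \sum_i a_i\ul h^{\ul m_i}$ while raising $\ord_\ul g(a_i) + l_i$ to at least $n+1$. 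After at most $k$ iterations, all $a_i$ belong to $\ideal{\ul g}^{k-l_i}$.

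The lifting step is the main obstacle. Since $\ul g,\ul f$ is an $R$-sequence, $\oul f$ is an $R_\ul g$-sequence, and consequently $\oul f,\ul Y$ is a regular sequence on the polynomial ring $R_\ul g[\ul Y]$. Therefore the Taylor complex of the monomials $\oul f^{\ul k_i}\ul Y^{\ul l_i}$ is a free resolution of the ideal they generate, and the first syzygy module is generated by the pairwise Taylor syzygies arising from LCM-commutativity. Each such syzygy lifts literally — replacing $\oul f$ by $\ul f$ and $\ul Y$ by $\ul g$ — to an exact pairwise syzygy of $\ul h^{\ul m_i} = \ul f^{\ul k_i}\ul g^{\ul l_i}$ in $R$, since $\ul h = \ul f,\ul g$ is itself a regular sequence and monomials in a commutative setting satisfy commutativity syzygies exactly. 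This closes the iteration and yields strictness; the subtlest point is the Taylor generation statement in the mixed setting $R_\ul g[\ul Y]$, which rests on the joint regularity of $\oul f,\ul Y$ and on the separatedness-based nonzerodivisor facts noted above.
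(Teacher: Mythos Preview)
Your reduction of the isomorphism chain to $\ul g$-strictness is correct, and the iterative scheme (raise the weighted order $n=\min_i(\ord_\ul g(a_i)+l_i)$ by subtracting a lifted syzygy) is a legitimate alternative to the paper's one-shot use of Lemma~\ref{7}.\eqref{7b}. The problem is the step you yourself flag as ``subtlest'': the assertion that the Taylor complex on $\oul f^{\ul k_i}\ul Y^{\ul l_i}$ is a free resolution in $R_\ul g[\ul Y]$.

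Your justification---``$\oul f,\ul Y$ is a regular sequence, therefore the Taylor complex is a resolution''---is not a valid implication. The standard fact (\cite[Lem.~15.1]{Eis95}) says that the Taylor complex on monomials in \emph{indeterminates} over a ring is exact; it does not say this for monomials in an arbitrary regular sequence. In fact, your claim is precisely the exactness of $\gr_\ul g C$ at the middle spot, where $C\colon R^U\to R^u\to R$ is the paper's Taylor-type complex. This is not a preliminary to the proof; it \emph{is} the proof. The paper establishes it by a second passage to the associated graded: one puts the $\oul f$-filtration on each $\ul Y$-graded piece of $\gr_\ul g C$ (this is where Lemma~\ref{1} enters, to identify the induced $\ul f$-filtration with the $\oul f$-filtration), checks that on these finite free $R_\ul g$-modules the filtration is separated and \emph{complete}, and then applies Lemma~\ref{7}.\eqref{7e} to reduce to the exactness of $\gr_\ul f\gr_\ul g C$---which is finally a Taylor complex over the honest polynomial ring $R_\ul h[\ul Z]$. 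You invoke only ``separatedness-based nonzerodivisor facts'' and never mention completeness; without it there is no mechanism to lift exactness from the associated graded back to $R_\ul g[\ul Y]$, and the Taylor-generation claim is unproven.

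If you do fill the gap (work $\ul Y$-degree by $\ul Y$-degree, apply the $\oul f$-filtration, and use Lemma~\ref{7}.\eqref{7e} together with the completeness hypothesis), your argument becomes correct, but it is then a reorganization of the paper's proof rather than a different route: both reduce to Taylor exactness over $R_\ul h[\ul X,\ul Y]$ via a double associated-graded construction, and the iterative framing versus the filtered-complex framing is a matter of packaging.
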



The proof of Proposition~\ref{2} relies on the following lemma proved by Uli Walther for $k=1$ (see \cite[Lem.~6.5]{Wal17}). 
He assumes that $R$ is a domain, and that $\ul g,\ul f$ is an $R$-sequence in every order.
However, his proof needs only that $\ul f,\ul g$ is an $R$-sequence.


\begin{lem}\label{1}
Suppose that $\ul g$ and $\ul f,\ul g$ are $R$-sequences.
Let $P\in R[\ul Y]_l$ such that $P(\ul g)\in\ideal{\ul f}^k$.
Then $P(\ul g)=Q(\ul g)$ for some $Q\in\ideal{\ul f}^k[\ul Y]_l$.
In particular, the Rees map \eqref{15} induces a filtered isomorphism
\[
\xymatrix{
\varphi_\ul g\colon\bigl(R_\ul g[\ul Y],\ideal{\oul f}^\bullet[\ul Y]\bigr)\ar[r] & \bigl(\gr_\ul g R,\gr_\ul g\ideal{\ul f}^\bullet\bigr).
}
\]
\end{lem}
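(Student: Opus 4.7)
The substance of the lemma is the ideal equality
\[
\ideal{\ul f}^k \cap \ideal{\ul g}^l = \ideal{\ul f}^k \ideal{\ul g}^l,
\]
which directly yields the polynomial claim: writing $P(\ul g) = \sum_{\abs{\ul b}=l} q_{\ul b} \ul g^{\ul b}$ with $q_{\ul b} \in \ideal{\ul f}^k$ gives the desired $Q = \sum q_{\ul b} \ul Y^{\ul b}$. The ``in particular'' filtered isomorphism is then immediate: under the Rees map $\varphi_{\ul g}$ of Theorem~\ref{30} the forward inclusion $\ideal{\oul f}^k[\ul Y] \subseteq \gr_{\ul g}\ideal{\ul f}^k$ is clear from the formulas, and the reverse lifts any $\sigma_{\ul g}(a)$ with $a \in \ideal{\ul f}^k \cap \ideal{\ul g}^l$ using the polynomial claim applied to the polynomial $\sum r_{\ul b} \ul Y^{\ul b}$ coming from an expression $a = \sum r_{\ul b} \ul g^{\ul b}$.

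My strategy for the ideal equality is to show $\operatorname{Tor}_1^R(R/\ideal{\ul g}^l, R/\ideal{\ul f}^k) = 0$; tensoring the short exact sequence $0 \to \ideal{\ul g}^l \to R \to R/\ideal{\ul g}^l \to 0$ with $R/\ideal{\ul f}^k$ identifies this Tor precisely with the kernel $(\ideal{\ul f}^k \cap \ideal{\ul g}^l)/\ideal{\ul f}^k\ideal{\ul g}^l$. Granting momentarily that $\oul g$ is a regular sequence on $R/\ideal{\ul f}^k$, the Koszul complex $K(\ul g)$---a free resolution of $R/\ideal{\ul g}$ since $\ul g$ is $R$-regular---base-changes to the acyclic complex $K(\oul g; R/\ideal{\ul f}^k)$, so $\operatorname{Tor}_i^R(R/\ideal{\ul g}, R/\ideal{\ul f}^k) = 0$ for $i\ge 1$ (and hence also for the reversed arguments by Tor-symmetry). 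I will then induct on $l$ via the short exact sequence $0 \to \ideal{\ul g}^{l-1}/\ideal{\ul g}^l \to R/\ideal{\ul g}^l \to R/\ideal{\ul g}^{l-1} \to 0$: the Rees iso for $\ul g$ shows that the left term is free over $R/\ideal{\ul g}$, so its $\operatorname{Tor}_1$ against $R/\ideal{\ul f}^k$ vanishes, and the long exact sequence carries the induction through.

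The hard part will be the intermediate claim that $\oul g$ is a regular sequence on $R/\ideal{\ul f}^k$, which I plan to prove by induction on $k$. The Rees isomorphism for the regular sequence $\ul f$ makes each $\ideal{\ul f}^{k-1}/\ideal{\ul f}^k$ a free $R/\ideal{\ul f}$-module; since $\ul f,\ul g$ is a regular sequence, $\oul g$ is regular on $R/\ideal{\ul f}$ and hence on every such free module. Combined with the short exact sequence $0 \to \ideal{\ul f}^{k-1}/\ideal{\ul f}^k \to R/\ideal{\ul f}^k \to R/\ideal{\ul f}^{k-1} \to 0$ and the standard preservation of regular sequences under such extensions (which itself uses Tor-vanishing for Koszul), this closes the induction on $k$. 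The remainder of the argument is then a routine chase through long exact Tor sequences.
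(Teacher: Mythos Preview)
Your argument is correct and is genuinely different from the paper's.  The paper proceeds by induction on $k$: the case $k=1$ is quoted as Walther's lemma, and the inductive step peels off one factor of $\ul f$ by writing $P=\sum_iP_if_i$ and then invoking the $\ul f$-strictness of the map $R^r\to R$, $e_i\mapsto f_i$ (this is the special case $r=0$ of Proposition~\ref{2}, which collapses to the Rees isomorphism for $\ul f$) to arrange $P_i(\ul g)\in\ideal{\ul f}^{k-1}$ and apply the induction hypothesis.  You instead recast the assertion as the ideal identity $\ideal{\ul f}^k\cap\ideal{\ul g}^l=\ideal{\ul f}^k\ideal{\ul g}^l$ and prove it by showing $\operatorname{Tor}_1^R(R/\ideal{\ul g}^l,R/\ideal{\ul f}^k)=0$; the two Rees isomorphisms (for $\ul f$ and for $\ul g$) feed into Koszul long exact sequences that run a double induction on $k$ and $l$.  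Your route is self-contained (it does not need Walther's lemma as an external input, nor the forward reference to Proposition~\ref{2}) and makes the homological content explicit; the paper's route is more elementary in flavour and keeps the argument within the language of filtrations and symbols used throughout \S\ref{33}.  One minor remark: for your purposes you only need $H_i(\ul g;R/\ideal{\ul f}^k)=0$ for $i\ge1$, which follows directly from the long exact Koszul sequence without first establishing that $\oul g$ is a regular sequence on $R/\ideal{\ul f}^k$; the latter is true but is a slightly stronger statement than you actually use.
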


\begin{proof}
We proceed by induction on $k$.
The claim is vacuous for $k=0$.
By Walther's lemma (see \cite[Lem.~6.5]{Wal17}), we may assume that $P=\sum_iP_if_i\in\ideal{\ul f}[\ul Y]_l$ with $P_i\in R[\ul Y]_l$, and hence $P(\ul g)=\sum_iP_i(\ul g)f_i\in\ideal{\ul f}^k$.
The following proof of Proposition~\ref{2} relies only on the particular claim, which reduces to the Rees isomorphism in case $r=0$.
Applying Proposition~\ref{2} with $r=0$, $\ul f$ playing the role of $\ul g$ and $\ul m_i=\ul e_i$, it follows that $P_i(\ul g)\in\ideal{\ul f}^{k-1}$.
By induction hypothesis, $P_i(\ul g)=Q_i(\ul g)$ for some $Q_i\in\ideal{\ul f}^{k-1}[\ul Y]_l$.
Then $P(\ul g)=Q(\ul g)$ for $Q=\sum_iQ_if_i\in\ideal{\ul f}^k[\ul Y]_l$.
This proves the first claim.

Suppose now that $\ul g$ is an $R$-sequence.
Then $\varphi_\ul g$ is an isomorphism by Theorem~\ref{30}.
Clearly $\varphi_\ul g\bigl(\ideal{\oul f}^\bullet[\ul Y]\bigr)\subset\gr_\ul g\ideal{\ul f}^\bullet$ (see \eqref{16}).
For the converse inclusion, take $\sigma_\ul g(x)\in\gr_\ul g^l\ideal{\ul f}^k$.
Then $x=P(\ul g)\in\ideal{\ul f}^k$ for some $P\in R[\ul Y]_l$.
By the first claim, we may assume that $P\in\ideal{\ul f}^k[\ul Y]_l$.
Then $R[\ul Y]\to R_\ul g[\ul Y]$ maps $P\mapsto\ol P\in\ideal{\oul f}^k[\ul Y]_l$ with $y=\sigma_\ul g(x)=\sigma_\ul g(P(\ul g))=\varphi_\ul g\bigl(\ol P\bigr)$ by Remark~\ref{19}.
This shows that $\varphi_\ul g\bigl(\ideal{\oul f}^k[\ul Y]_l\bigr)=\gr_\ul g^l\ideal{\ul f}^k$ and the particular claim follows.
\end{proof}


A second ingredient of the proof of Proposition~\ref{2} is the following general relation of strict and graded exactness.


\begin{lem}\label{7}\pushQED{\qed}
Let $A$ be a filtered ring, and let
\[
\xymatrix{
C\colon N'\ar[r]^-{\alpha'} & N\ar[r]^-\alpha & N''
}
\]
be a filtered complex of $A$-modules with associated graded complex $\gr C$.
\begin{enumerate}[(a)]

\item\label{7a} If $C$ is strict exact, then $\gr C$ is exact (see \cite[Lem.~1.(a)]{Sjo73}).

\item\label{7b} If $\gr C$ is exact and the filtration on $N$ is exhaustive, then $\alpha$ is strict (see \cite[Lem.~1.(b)]{Sjo73}).

\item\label{7e} If the filtration on $N'$ is complete and the filtration on $N$ is exhaustive and separated, then $C$ is strict exact if and only if $\gr C$ is exact (see \cite[Lem.~1.(e)]{Sjo73}).\qedhere

\end{enumerate}
\end{lem}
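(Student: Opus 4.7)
The plan is to prove the three parts by direct manipulation of the definitions, in the spirit of the Sj\"odin arguments cited at each item. Throughout, recall that a filtered map $\alpha\colon N\to N''$ is \emph{strict} when $\alpha(F^kN)=F^kN''\cap\alpha(N)$ for every $k$, so that the image carries the same filtration whether viewed as a quotient of $N$ or as a subobject of $N''$. Each part is a short diagram chase from this definition, supplemented by whichever completeness or exhaustiveness hypothesis is available.

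For (\ref{7a}), if $\bar x\in\gr^kN$ satisfies $\gr\alpha(\bar x)=0$, then $\alpha(x)\in F^{k+1}N''$. Strictness of $\alpha$ supplies $y\in F^{k+1}N$ with $\alpha(y)=\alpha(x)$, so $x-y\in F^kN\cap\ker\alpha$; exactness of $C$ at $N$ rewrites this set as $F^kN\cap\alpha'(N')$, and strictness of $\alpha'$ further as $\alpha'(F^kN')$. Picking $w\in F^kN'$ with $\alpha'(w)=x-y$ gives $\gr\alpha'(\bar w)=\bar x$, which is exactness of $\gr C$ at $\gr N$.

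For (\ref{7b}), take $y\in F^kN''\cap\alpha(N)$ and any preimage $x\in N$; exhaustiveness of the filtration on $N$ yields $x\in F^jN$ for some $j$. If $j<k$, then $\alpha(x)=y\in F^{j+1}N''$, so $\gr\alpha(\bar x)=0$ in $\gr^jN''$. Exactness of $\gr C$ produces $w\in F^jN'$ with $x-\alpha'(w)\in F^{j+1}N$, and $\alpha\circ\alpha'=0$ shows that $x-\alpha'(w)$ still maps to $y$. Iterating $k-j$ times moves the preimage into $F^kN$, which is precisely the strictness of $\alpha$.

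Part (\ref{7e}) combines the previous two. The forward direction is (\ref{7a}). For the converse, (\ref{7b}) already gives strictness of $\alpha$, so only exactness of $C$ at $N$ and strictness of $\alpha'$ remain. Given $x\in F^{j_0}N\cap\ker\alpha$, iterate the construction of (\ref{7b}) infinitely: inductively produce $w_j\in F^jN'$ for $j\geq j_0$ such that $x-\sum_{i=j_0}^j\alpha'(w_i)\in F^{j+1}N$, noting that each remainder remains in $\ker\alpha$. Completeness of the filtration on $N'$ lets one form $w=\sum_{j\geq j_0}w_j\in F^{j_0}N'$ (the submodule $F^{j_0}N'$ being closed), and separatedness of $N$ combined with continuity of the filtered map $\alpha'$ forces $\alpha'(w)=x$. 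Taking $j_0=k$ for an arbitrary $x\in F^kN\cap\ker\alpha$ then yields simultaneously the strictness of $\alpha'$ and the exactness of $C$ at $N$. The main obstacle I anticipate is exactly this passage to the limit in (\ref{7e}): the three hypotheses (completeness of $N'$, exhaustiveness and separatedness of $N$) must conspire to make the infinite sum converge, live in the correct filtration piece, and map to the prescribed element --- routine once disentangled, but easy to muddle.
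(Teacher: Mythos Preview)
Your proof is correct. The paper does not actually prove this lemma at all: it is stated with an immediate \textsc{qed} and attributes each part to \cite[Lem.~1]{Sjo73}. What you have written is essentially the standard Sj\"odin argument spelled out carefully, so there is nothing to compare beyond noting that you supply the details the paper omits by citation.
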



\begin{proof}[Proof of Proposition~\ref{2}]
Set $U=\set{(i,j)\mid 1\le i<j\le u}$ and consider the $R$-linear map
\[
\xymatrix@R=0em{
R^U\ar[r] & R^u,\\
\ul e_{i,j}\ar@{|->}[r] & \ul h^{\ul m_j-\ul\min\set{\ul m_i,\ul m_j}}\ul e_i-\ul h^{\ul m_i-\ul\min\set{\ul m_i,\ul m_j}}\ul e_j,
}
\]
where $\ul\min$ denotes the component-wise minimum.
Assign to the generators bidegrees 
\[
\deg(\ul e_i)=(\abs{\ul l_i},\abs{\ul k_i}),\quad \deg(\ul e_{i,j})=(\abs{\ul l_i}+\abs{\ul l_j},\abs{\ul k_i}+\abs{\ul k_j}).
\]
With component-wise $\ul f$- and $\ul g$-filtrations,
\[
\xymatrix@R=0em{
C\colon R^U\ar[r] & R^u\ar[r]^-{\ul h^M} & R
}
\]
becomes a bifiltered complex of free $R$-modules.
By Lemma~\ref{7}.\eqref{7b}, it suffices to show that the $\ul g$-graded complex $\gr_\ul gC$ is exact.
This can be checked on graded pieces.
By Lemma~\ref{7}.\eqref{7e}, it suffices to show that the $\ul f$-filtration $\gr_\ul g\ideal{\ul f}^\bullet$ is separated and complete on each graded piece of $\gr_\ul gC$, and that the associated graded complex $\gr_\ul f\gr_\ul gC$ is exact.

By Lemma~\ref{1}, the Rees isomorphism $\varphi_\ul g\colon R_\ul g[\ul Y]\to\gr_\ul gR$ identifies the induced $\ul f$-filtration $\gr_\ul g\ideal{\ul f}^\bullet$ on $\gr_\ul gR$ with the $\oul f$-filtration on the coefficient ring $R_\ul g$.
We denote by $\gr_\ul f\varphi_\ul g$ the associated graded isomorphism.
The graded pieces of $\gr_\ul gR$, and hence of $\gr_\ul gC$, are finite direct sums of $R_\ul g$.
By hypothesis, the $\oul f$-filtration is separated and complete on each summand.
It follows that the induced $\ul f$-filtration is separated and complete on each graded piece of $\gr_\ul gC$.

By Theorem~\ref{30} applied to the regular $R_\ul g$-sequence $\oul f$ and Lemma~\ref{1}, there is a bigraded isomorphism of $R_\ul h$-algebras
\begin{equation}\label{5}
\xymatrix@C=3em{
R_\ul h[\ul Z]\cong(R_\ul g)_{\oul f}[\ul X][\ul Y]\ar[r]^-{\varphi_{\oul f}[\ul Y]}_-\cong &
\gr_{\oul f}(R_\ul g)[\ul Y]\ar[r]^-{\gr_\ul f\varphi_\ul g}_-\cong & 
\gr_\ul f\gr_\ul gR.
}
\end{equation}
Let now $\ul m=(\ul k,\ul l)\in\NN^r\times\NN^s=\NN^t$.
Note that $\oul f^\ul k\ne0$ by Remark~\ref{20} applied to the $R_\ul g$-sequence $\oul f$.
By $R_\ul g$-linearity of $\varphi_\ul g$ and Remark~\ref{20},
\begin{equation}\label{4}
\varphi_\ul g\bigl(\oul f^{\ul k}\ul Y^{\ul l}\bigr)=
\oul f^{\ul k}\varphi_\ul g(\ul Y^{\ul l})=
\sigma_\ul g(\ul f^{\ul k})\sigma_\ul g(\ul g^{\ul l})=
\sigma_\ul g(\ul f^{\ul k}\ul g^{\ul l})=
\sigma_\ul g(\ul h^\ul m).
\end{equation}
It follows that isomorphism \eqref{5} maps
\[
\xymatrix{
\ul Z^{\ul m}=\ul X^{\ul k}\ul Y^{\ul l}\ar@{|->}[r] & \sigma_{\oul f}\bigl(\oul f^{\ul k}\bigr)\ul Y^{\ul l}\ar@{|->}[r] & \sigma_\ul f\bigl(\varphi_\ul g\bigl(\oul f^{\ul k}\ul Y^{\ul l}\bigr)\bigr)=\sigma_\ul f(\sigma_\ul g(\ul h^\ul m)).
}
\]
The isomorphism \eqref{5} thus turns $\gr_\ul f\gr_\ul gC$ into the exact complex (see \cite[Lem.~15.1]{Eis95})
\[
\xymatrix@R=0em{
R_\ul g[\ul Z]^U\ar[r] & R_\ul g[\ul Z]^u\ar[r] & R_\ul g[\ul Z],\\
\ul e_{i,j}\ar@{|->}[r] & \ul Z^{\ul m_j-\ul\min\set{\ul m_i,\ul m_j}}\ul e_i-\ul Z^{\ul m_i-\ul\min\set{\ul m_i,\ul m_j}}\ul e_j,\\
& \ul e_i\ar@{|->}[r] & \ul Z^{\ul m_i},
}
\]
which proves the first claim.

With $R/\ideal{\ul h^M}$ equipped with the image $\ul g$-filtration,
\[
\xymatrix@R=0em{
R^u\ar[r]^-{\ul h^M} & R\ar[r] & R/\ideal{\ul h^M}\ar[r] & 0
}
\]
is an exact complex of $\ul g$-strict $R$-linear maps.
Then the corresponding $\ul g$-graded complex is exact by Lemma~\ref{7}.\eqref{7a}, and hence
\[
\gr_\ul g\bigl(R/\ideal{\ul h^M}\bigr)\cong\gr_\ul g(R)/\gr_\ul g\ideal{\ul h^M}\cong\gr_\ul g(R)/\ideal{\sigma(\ul h^M)}.
\]
With \eqref{4}, this proves the particular claim.
\end{proof}


We now specialize to the case where $\ul g=\ul h$ and $R$ is Noetherian \textsuperscript{*}local graded with \textsuperscript{*}maximal ideal $\mm_R$.
Denote the \emph{(homogeneous) socle} of $R$ by
\begin{equation}\label{23}
\soc R=\ann_R\mm_R.
\end{equation}
We assume that $\ul h$ has homogeneous components making $\ideal{\ul h}\lhd R$ a graded ideal.
Then $\ideal{\ul h}\subset\mm_R$ and the filtration $\ideal{\ul h}^\bullet$ is separated by Krull intersection theorem (see \cite[\S3.1, Thm.~1]{Nor53}) and Nakayama lemma (see \cite[Ex.~1.5.24]{BH93}).
With $R$ also $\gr_\ul h^0 R=R_\ul h$ is Noetherian \textsuperscript{*}local graded with \textsuperscript{*}maximal ideal $\mm_{R_\ul h}=\mm_R/\ideal{\ul h}$.
The $R_\ul h$-algebra $\gr_\ul hR$ is now bigraded with unique bigraded maximal ideal
\begin{equation}\label{26}
\mm_{\gr_\ul hR}=\mm_{R_\ul h}+\ideal{\sigma_\ul h(\ul h)}.
\end{equation}
For any bigraded algebra $S$ with unique bigraded maximal ideal $\mm_S$, we define the \emph{(bihomogeneous) socle} as in \eqref{23}.


The rows of the matrix $M$ from \eqref{13} generate a monoid ideal
\[
\MM=\ideal{\ul m_1,\dots,\ul m_u}\subset\NN^t.
\]
By Dickson's lemma (see \cite{Dic13}), every monoid ideal is finitely generated.


\begin{dfn}\label{17}
Let $\MM\subset\NN^t$ be a monoid ideal.
By the \emph{socle} of $\MM$ we mean the subset
\[
\soc\MM=\set{\ul n\in\NN^t\setminus\MM\xmid\ul n+(\NN^t\setminus\set{0})\subset\MM}\subset\NN^t.
\]
For $d\in\NN$ we write $\soc_d\MM=\set{\ul m\in\soc\MM\mid\abs{\ul m}=d}$.
\end{dfn}


By definition,
\begin{equation}\label{12}
\ann_{R[\ul Z]/\ideal{\ul Z^M}}\bigl(\oul Z\bigr)=
\bigoplus_{\ul m\in\soc\MM}R\oul Z^\ul m\cong R^{\soc\MM}.
\end{equation}
This is a Noetherian $R$-module if $R$ is a Noetherian ring.
In particular, $\soc\MM$ is a finite set.
Using \eqref{12} and $\mm_{R[\ul Z]/\ideal{\ul Z^M}}=\mm_R+\ideal{\oul Z}$, we find
\begin{equation}\label{28}
\soc\bigl(R[\ul Z]/\ideal{\ul Z^M}\bigr)=
\bigoplus_{\ul m\in\soc\MM}\soc(R)\oul Z^\ul m\cong\soc(R)^{\soc\MM}.
\end{equation}


\begin{cor}\label{14}
Suppose that $R$ is a Noetherian \textsuperscript{*}local graded ring, and that $\ul h$ a (component-wise) homogeneous $R$-sequence. 
Then the symbol map (extended by zero)
\[
\xymatrix@R=0em{
R/\ideal{\ul h^M}\ar[r] & \gr_\oul h\bigl(R/\ideal{\ul h^M}\bigr),\\
\ol x\ar@{|->}[r] & \sigma_\oul h(\ol x),
}
\]
identifies socles.
\end{cor}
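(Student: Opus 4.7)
The plan is to combine Proposition~\ref{2}, applied in the degenerate case $r=0$ with $\ul g=\ul h$ (so $\ul f$ and $K$ are absent, $L=M$, and the only surviving hypothesis---that $\ul h$ is an $R$-sequence---is given), with the socle computation \eqref{28}. The resulting bigraded $R_\ul h$-algebra isomorphism
\[
\varphi\colon R_\ul h[\ul Y]/\ideal{\ul Y^M}\xrightarrow{\;\cong\;}\gr_\oul h\bigl(R/\ideal{\ul h^M}\bigr),\qquad\oul Y^\ul m\mapsto\sigma_\oul h(\oul h^\ul m),
\]
together with \eqref{28} applied to the source, yields
\[
\soc\bigl(\gr_\oul h(R/\ideal{\ul h^M})\bigr)=\bigoplus_{\ul m\in\soc\MM}\soc(R_\ul h)\,\sigma_\oul h(\oul h^\ul m).
\]
Write $A=R/\ideal{\ul h^M}$; by \eqref{26} the bigraded maximal ideal of $\gr_\oul h A$ is $\mm_{R_\ul h}+\ideal{\sigma_\oul h(\oul h)}$. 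The easy inclusion $\sigma_\oul h(\soc A)\subset\soc(\gr_\oul h A)$ is immediate: for $\ol x\in\soc A$, multiplying $\sigma_\oul h(\ol x)$ by $\sigma_\oul h(\oul h_i)$ yields the class of $\oul h_i\ol x=0$, and multiplying by the class of a homogeneous $a\in\mm_R$ yields the class of $\ol{ax}=0$.

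For the reverse direction I would take a bihomogeneous $y\in\soc(\gr_\oul h A)$ of $\oul h$-degree $k$ and use the displayed decomposition to write $y=\sum_{|\ul m|=k,\,\ul m\in\soc\MM}b_\ul m\,\sigma_\oul h(\oul h^\ul m)$ with homogeneous $b_\ul m\in\soc(R_\ul h)$. Choose homogeneous lifts $c_\ul m\in R$ and set $x=\sum c_\ul m\ul h^\ul m\in\ideal{\ul h}^k$. Then $\ol x\in\soc A$: for each $i$, $h_ix=\sum c_\ul m\ul h^{\ul m+\ul e_i}\in\ideal{\ul h^M}$ since $\ul m+\ul e_i\in\MM$ for $\ul m\in\soc\MM$; for homogeneous $a\in\mm_R$, the relation $ab_\ul m=0$ in $R_\ul h$ forces $ac_\ul m=\sum_jd_{\ul m,j}h_j$, so $ax=\sum_{\ul m,j}d_{\ul m,j}\ul h^{\ul m+\ul e_j}\in\ideal{\ul h^M}$, and since $\mm_R$ has a homogeneous generating set this suffices. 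By Remark~\ref{20}, $\sigma_\ul h(x)=\sum b_\ul m\ul Y^\ul m$ in degree $k$ of $\gr_\ul h R\cong R_\ul h[\ul Y]$, which is nonzero by linear independence of the $\ul Y^\ul m$ and is mapped to $y$ by $\varphi$; hence $\sigma_\oul h(\ol x)=y$. A general socle element decomposes into bihomogeneous pieces, each handled in this way, so the induced bijection identifies $\gr_\oul h\soc A$ with $\soc\gr_\oul h A$.

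The main obstacle is this final identification $\sigma_\oul h(\ol x)=y$: tracking $x=\sum c_\ul m\ul h^\ul m$ through the Rees isomorphism (Theorem~\ref{30}) via Remarks~\ref{19} and \ref{20}, and then through the passage to the quotient in Proposition~\ref{2}, requires careful bookkeeping, but once the identification $\sigma_\ul h(\ul h^\ul m)\leftrightarrow\ul Y^\ul m$ is in place the computation is essentially formal.
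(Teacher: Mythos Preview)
Your proof is correct. Both you and the paper invoke Proposition~\ref{2} with $r=0$ and \eqref{28} to obtain
\[
\soc\bigl(\gr_\oul h A\bigr)=\bigoplus_{\ul m\in\soc\MM}\soc(R_\ul h)\,\sigma_\oul h(\oul h^\ul m),
\]
so the setup is the same. The difference is in the hard direction and in what is actually concluded. The paper starts from an arbitrary $0\ne\ol x\in\soc A$ and, by repeatedly subtracting the lowest $\oul h$-order part (using finiteness of $\soc\MM$), arrives at the explicit decomposition $\soc A=\bigoplus_{\ul m\in\soc\MM}\soc(R_\ul h)\,\oul h^\ul m$; the identification with $\soc(\gr_\oul h A)$ is then the obvious $R_\ul h$-linear isomorphism $\oul h^\ul m\leftrightarrow\sigma_\oul h(\oul h^\ul m)$. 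You go the opposite way: given a bihomogeneous $y\in\soc(\gr_\oul h A)$ you directly write down a preimage $\ol x\in\soc A$, and conclude $\gr_\oul h(\soc A)=\soc(\gr_\oul h A)$ inside $\gr_\oul h A$. Note that your constructed preimages actually lie in $\sum_{\ul m\in\soc\MM}\soc(R_\ul h)\,\oul h^\ul m\subset\soc A$, and since the $\oul h$-filtration on $A$ is finite (the ideal $\ideal{\oul h}$ is nilpotent in $A$), the equality of associated gradeds forces this inclusion to be an equality---so you recover the paper's explicit description as well. Your route avoids the iteration but needs this small extra filtration argument to reach the explicit form of $\soc A$; the paper's iteration gives that form immediately.
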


\begin{proof}

Proposition~\ref{2} yields an isomorphism of free $R_\ul h$-modules
\begin{equation}\label{8}
\xymatrix@R1em{
\ann_{\gr_\ul h(R)/\ideal{\sigma_\ul h(\ul h^M)}}\bigl(\ol{\sigma_\ul h(\ul h)}\bigr)\ar[r]_-\cong & 
\ann_{\gr_\oul h\left(R/\ideal{\ul h^M}\right)}\bigl(\sigma_\oul h\bigl(\oul h\bigr)\bigr)\\
\bigoplus_{\ul m\in\soc\MM}R_\ul h\ol{\sigma_\ul h(\ul h^\ul m)}\ar@{=}[u] &
\bigoplus_{\ul m\in\soc\MM}R_\ul h\sigma_\oul h\bigl(\oul h^\ul m\bigr),\ar@{=}[u]\\
\ol{\sigma_\ul h(x)}\ar@{|->}[r] & \sigma_\oul h(\ol x).
}
\end{equation}
Since $\sigma_\oul h\bigl(\oul h\bigr)\in\mm_{\gr_\oul h\left(R/\ideal{\ul h^M}\right)}$ by \eqref{26}, it follows that
\[
\soc\bigl(\gr_\oul h\bigl(R/\ideal{\ul h^M}\bigr)\bigr)\subset\bigoplus_{\ul m\in\soc\MM}R_\ul h\sigma_\oul h\bigl(\oul h^\ul m\bigr).
\]
Moreover, the $R_\ul h$-linear surjection
\[
\xymatrix{
R/\ideal{\ul h^M}\supset\sum_{\ul m\in\soc\MM}R_{\ul h}\oul h^{\ul m}\ar@{->>}[r] & \bigoplus_{\ul m\in\soc\MM}R_{\ul h}\sigma_{\oul h}\bigl(\oul h^\ul m\bigr)\cong R_\ul h^{\soc\MM}
}
\]
onto the free $R_\ul h$-module must be an isomorphism, and hence
\[
R/\ideal{\ul h^M}\supset\bigoplus_{\ul m\in\soc\MM}R_{\ul h}\oul h^{\ul m}
\cong\bigoplus_{\ul m\in\soc\MM}R_{\ul h}\sigma_{\oul h}\bigl(\oul h^\ul m\bigr)
\subset\gr_\oul h\bigl(R/\ideal{\ul h^M}\bigr)
\]
is an isomorphism of free $R_\ul h$-modules.
The action of the respective graded and bigraded maximal ideal on these modules reduces to that of $\mm_{R_\ul h}$.
Therefore, it remains to show that
\[
\soc\bigl(R/\ideal{\ul h^M}\bigr)\subset\bigoplus_{\ul m\in\soc\MM}R_\ul h\oul h^\ul m.
\]
To this end, let $0\ne\ol x\in\soc\bigl(R/\ideal{\ul h^M}\bigr)$ of $\oul h$-order $d=\ord_\oul h(\ol x)$.
In particular, $x\ideal{\ul h}\subset\ideal{\ul h^M}$ since $\ul h\in\mm_R$.
By Remark~\ref{20} and Proposition~\ref{2}, taking symbols yields
\[
\sigma_\ul h(x)\sigma_\ul h(\ul h)=\sigma_\ul h(x\ul h)\in\gr_\ul h\ideal{\ul h^M}=\ideal{\sigma_\ul h(\ul h)^M}.
\]
Then, by \eqref{8}, $\sigma_\oul h(\ol x)\in\ann_{\gr_\oul h(R/\ideal{\ul h^M})}\bigl(\sigma_\oul h\bigl(\oul h\bigr)\bigr)$ can be written as
\[
\sigma_\oul h(\ol x)=\sum_{\ul m\in\soc_d\MM}\ol x_\ul m\sigma_\oul h\bigl(\oul h^\ul m\bigr)\in\gr_\oul h\bigl(R/\ideal{\ul h^M}\bigr),
\]
where $\ol x_\ul m\in R_\ul h$.
With
$x'=x-\sum_{\ul m\in\soc_d\MM}x_\ul m\ul h^\ul m$,
this means that
\[
\ol{x'}=\ol x-\sum_{\ul m\in\soc_d\MM}\ol x_\ul m\oul h^\ul m\in\ideal{\oul h}^{d+1}\lhd R/\ideal{\ul h^M},
\]
and hence $\ord_\oul h\bigl(\ol{x'}\bigr)>d=\ord_\oul h(\ol x)$ if $\ol{x'}\ne0$.
By \eqref{12}, $\ul Z^\ul m\ideal{\ul Z}\in\ideal{\ul Z^M}\lhd R[\ul Z]$.
Substituting $\ul Z=\ul h$ gives $\ul h^\ul m\ideal{\ul h}\subset\ideal{\ul h^M}$, and hence $x'\ideal{\ul h}\subset\ideal{\ul h^M}$.
Since $\soc\MM$ is finite, iterating yields
\[
\ol x=\sum_{\ul m\in\soc\MM}\ol x_\ul m\oul h^\ul m.
\]
The remaining inclusion follows.
\end{proof}

\section{Macaulay's inverse system}\label{40}

Let $P$ be a Noetherian \textsuperscript{*}local graded ring with \textsuperscript{*}maximal ideal $\mm_P$.
Then $P_0$ is Noetherian local with maximal ideal $\mm_{P_0}=(\mm_P)_0$.
We assume that $P$ is \emph{\textsuperscript{*}complete}, which means that $P_0$ is complete.
A $P$-module $M$ is called \emph{\textsuperscript{*}Artinian} if every descending chain of graded $P$-submodules is stationary.
Using Nakayama lemma (see \cite[Ex.~1.5.24]{BH93}), we define its \emph{socle degree} to be the nilpotency index
\[
\socdeg M=\inf\set{k\in\ZZ\mid\mm_P^kM=0}-1\in\NN\cup\set{-\infty}.
\]
Denote by $E_{P_0}(P_0/\mm_{P_0})$ the injective hull of the residue field of $P_0$.


\begin{thm}[Graded Matlis duality]
The dualizing functor 
\[
-^\vee=\intHom_{P_0}(-,E_{P_0}(P_0/\mm_{P_0}))
\]
defines an antiequivalence between the categories of \textsuperscript{*}Artinian and finitely generated graded $P$-modules (see \cite[Thm.~3.6.17]{BH93}).\qed
\end{thm}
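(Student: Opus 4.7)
The statement is classical (the author's \qed simply defers to \cite[Thm.~3.6.17]{BH93}), so the plan is to reduce the graded assertion to ordinary Matlis duality over the complete Noetherian local ring $P_0$, applied component by component, with careful bookkeeping of the graded structure.

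First I would verify that $-^\vee$ sends each category into the other. A \textsuperscript{*}Artinian graded $P$-module $M=\bigoplus_d M_d$ has degrees bounded above, and each $M_d$ has finite length over $P_0$ (this uses that $M/\mm_P M$ is a graded $P/\mm_P$-module of finite length on the top, and Nakayama to propagate downward through the grading). Dually, a finitely generated graded $P$-module $N=\bigoplus_d N_d$ has degrees bounded below, and each $N_d$ is a finitely generated $P_0$-module. With the convention $(M^\vee)_{-d}=\Hom_{P_0}(M_d,E_{P_0}(P_0/\mm_{P_0}))$, the grading of the dual gets reflected, so bounded above becomes bounded below and vice versa, and the finite-length/finite-generation conditions are exchanged by classical Matlis duality on each piece (using that $P_0$ is complete Noetherian local).

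Next I would construct the biduality map $\eta_M\colon M\to(M^\vee)^\vee$ as the natural evaluation and show it is an isomorphism. This again can be checked in each graded piece, where it reduces to the ungraded biduality map $M_d\to\Hom_{P_0}(\Hom_{P_0}(M_d,E),E)$; this is an isomorphism by classical Matlis duality on $P_0$, applicable because $M_d$ is either Artinian (hence of finite length, in the \textsuperscript{*}Artinian case) or finitely generated. Compatibility of the evaluation with the $P$-action follows formally since the $P$-structure on $M^\vee$ is by precomposition with the $P$-action on $M$.

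Finally, I would check that $-^\vee$ is exact (since $E_{P_0}(P_0/\mm_{P_0})$ is an injective $P_0$-module and $\intHom_{P_0}$ into an injective is exact), functorial, and contravariant, and conclude via the two biduality statements that it is an antiequivalence. The main obstacle is not any single step but the pedantic bookkeeping of the grading shift and the verification that \textsuperscript{*}Artinian is indeed equivalent to the pointwise finite-length condition together with a degree bound; once that is in place, everything reduces cleanly to ordinary Matlis duality over $P_0$ and the injectivity of $E_{P_0}(P_0/\mm_{P_0})$.
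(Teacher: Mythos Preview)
The paper does not give its own proof of this statement; it simply records the citation \cite[Thm.~3.6.17]{BH93} and places a \qed. Your outline is the standard argument and is essentially how Bruns--Herzog proceed: one reduces the graded duality to ordinary Matlis duality over the complete local ring $P_0$, applied degree by degree, after establishing the structural characterizations of \textsuperscript{*}Artinian and finitely generated graded modules.

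One minor caution: your claim that a \textsuperscript{*}Artinian graded $P$-module has ``degrees bounded above and each $M_d$ of finite length over $P_0$'' is the right intuition in the cases at hand (where $P$ is positively or trivially graded), but in the full generality of \textsuperscript{*}local graded rings the precise formulation and its proof require a bit more care than your parenthetical Nakayama sketch suggests; this is exactly what \cite[\S3.6]{BH93} sets up before proving the duality. Since the paper is content to cite that reference, your sketch is more than adequate here.
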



With $D=P^\vee$, the functor $-^\vee$ induces an antiisomorphism of lattices
\begin{equation}\label{39}
\xymatrix@R=0em{
\set{I\lhd P\text{ graded ideal}}\ar@{<->}[r]^-\perp & \set{W\subset D\text{ graded $P$-submodule}},\\
I\ar@{|->}[r] & I^\perp\rlap{$=(P/I)^\vee$,}\\
\llap{$(D/W)^\vee=$}W^\perp & W,\ar@{|->}[l]
}
\end{equation}
where $P/I$ is \textsuperscript{*}Artinian if and only if $W$ is finitely generated.


Let $\KK$ be field, and let $\ul x=x_1,\dots,x_n$ be indeterminates, where $n\in\NN\setminus\set0$.
Denote by $P$ either the (standard graded) polynomial ring $\KK[\ul x]$ or the formal power series ring $\KK\fs{\ul x}$, both with $\mm_P=\ideal{\ul x}$.
In both cases, $D$ identifies as a $\KK$-vector space with a polynomial ring $\KK[\ul X]$ in indeterminates $\ul X=X_1\dots,X_n$ with $P$-module structure given by (see \cite[Thm.~2.3.2]{Eli18})
\begin{equation}\label{10}
\ul x^\ul n\cdot\ul X^\ul m=
\begin{cases}
\ul X^{\ul m-\ul n} & \text{if } \ul m\ge\ul n,\\
0 & \text{otherwise}.
\end{cases}
\end{equation}
Note that $(\mm_P^k)^\perp=D_{<k}=\bigoplus_{j=0}^{k-1}D_j$ with $\dim_\KK D_{<k}<\infty$, for all $k\in\NN$.
With \eqref{39}, it follows that
\begin{equation}\label{34}
\max\deg I^\perp=\socdeg(P/I),\quad\dim_\KK I^\perp<\infty,
\end{equation}
if $P/I$ is \textsuperscript{*}Artinian.


In the case where $\KK$ is infinite and $P/I$ is Cohen--Macaulay, the following lemma is the starting point for our explicit description of $I^\perp$.


\begin{lem}\label{24}
Suppose that $R$ is a Noetherian \textsuperscript{*}complete \textsuperscript{*}local homogeneous graded algebra with coefficient field $\KK$.
Then $R\cong P/I$ where $I\lhd\KK\fs{\ul y}[\ul z]=P$ with $P_0=\KK\fs{\ul y}$ and indeterminates $\ul z$ of degree $1$.
Suppose that $P=\KK[\ul x]$ or $P=\KK\fs{\ul x}$, $\KK$ is infinite, and that $R$ is Cohen--Macaulay of dimension $d$.
Then, after a $\KK$-linear change of coordinates, $\ul x=\ul y,\ul z$ and $\ul z$ maps to an $R$-sequence of length $d$.
\end{lem}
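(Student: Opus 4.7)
The plan for the first assertion is to invoke Cohen's structure theorem for the Noetherian complete local ring $R_0$, whose residue field is $\KK$ since $\KK$ is a coefficient field of $R$, obtaining a surjection $\KK\fs{\ul y}\onto R_0$ for some finite tuple $\ul y$. Then, exploiting that $R$ is Noetherian and homogeneous graded (so that $R$ is generated as an $R_0$-algebra by finitely many elements of $R_1$), I would lift a finite generating set of $R_1$ to indeterminates $\ul z$ of degree $1$ to obtain a graded surjection $R_0[\ul z]\onto R$. Composing these yields the desired graded surjection $\KK\fs{\ul y}[\ul z]\onto R$ with $P_0=\KK\fs{\ul y}$.

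For the second assertion, my strategy is first to produce $d$ $\KK$-linearly independent linear forms in $P$ whose images in $R$ form an $R$-sequence, and then to apply a $\KK$-linear change of coordinates so they become $d$ of the variables. Let $V\subset\mm_P$ denote the $\KK$-span of $\ul x$, which bijects onto $\mm_P/\mm_P^2$ in both cases. I would proceed inductively: having chosen $\ell_1,\dots,\ell_{i-1}\in V$ with images in $R$ forming a regular sequence, I seek $\ell_i\in V$ whose image is a non-zero-divisor on $R'=R/(\ol{\ell}_1,\dots,\ol{\ell}_{i-1})R$. Since quotienting by a regular sequence preserves Cohen--Macaulayness, $R'$ is Cohen--Macaulay of dimension $d-i+1\ge 1$, so it has no embedded primes and its non-zero-divisors are precisely the elements avoiding the finitely many minimal primes of $R'$. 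For each such minimal prime $\pp$, the intersection $\pp\cap\ol V$ is a proper $\KK$-subspace of $\ol V$ (where $\ol V$ denotes the image of $V$ in $R'$), since otherwise $\pp$ would contain the ideal generated by $\ol V$, namely $\mm_{R'}$, forcing $\dim R'=0$. Because $\KK$ is infinite, a finite union of proper $\KK$-subspaces cannot exhaust $\ol V$, so a suitable $\ell_i$ exists.

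After $d$ steps, $\ell_1,\dots,\ell_d\in V$ will form a linear regular sequence on $R$. Completing them to a $\KK$-basis of $V$ induces a $\KK$-linear automorphism of $V$ which extends to a $\KK$-algebra automorphism of $P$, in the polynomial case by the universal property of the symmetric algebra, and in the power series case by the \textsuperscript{*}completeness of $P$. Relabeling the variables accordingly gives the desired splitting $\ul x=\ul y,\ul z$ with $\ul z=(\ell_1,\dots,\ell_d)$. The main obstacle I foresee is the inductive prime-avoidance step: it relies crucially both on the infinite-field hypothesis (to avoid a finite union of proper $\KK$-subspaces) and on the Cohen--Macaulay hypothesis (so that avoiding minimal primes yields non-zero-divisors, and this behaviour is preserved when passing to the quotient by a partial regular sequence).
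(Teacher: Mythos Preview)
Your proposal is correct and follows essentially the same approach as the paper: Cohen's structure theorem for $R_0$ together with finite generation of $R_1$ over $R_0$ for the first assertion, and inductive linear prime avoidance over an infinite field for the second. The only cosmetic difference is that the paper phrases the avoidance step in terms of all associated primes (using $\grade(\mm_R,R)>0$ to ensure $\mm_R\notin\Ass R$), whereas you restrict to minimal primes and invoke Cohen--Macaulayness to rule out embedded primes; both versions yield the same conclusion.
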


\begin{proof}
By hypothesis, $R_0$ is Noetherian and $R_1$ is a finite $R_0$-module (see \cite[Prop.~1.5.4]{BH93}).
Then $R_0\cong\KK\fs{\ul y}/I_0$ by Cohen structure theorem and the first claim follows.
Suppose now that $P=\KK[\ul x]$ or $P=\KK\fs{\ul x}$.
If $\KK$ is infinite and $\grade(\mm_R,R)>0$, then the $\KK$-vector space $\ideal{\oul x}_\KK\cong\mm_R/\mm_R^2$ is not the finite union of proper subspaces $\bigcup_{\pp\in\Ass R}(\pp+\mm_R^2)/\mm_R^2$.
Then some $\KK$-linear combination of $\oul x$ is regular on $R$ and the second claim follows by induction (see \cite[Prop.~1.5.12]{BH93}).
\end{proof}


Let $d\in\set{0,\dots,n}$ and partition $\ul x=\ul y,\ul z$ into sets of indeterminates $\ul y=y_1,\dots,y_{n-d}$ and $\ul z=z_1,\dots,z_d$.
Partition $\ul X=\ul Y,\ul Z$ correspondingly into sets of indeterminates $\ul Y=Y_1,\dots,Y_{n-d}$ and $\ul Z=Z_1,\dots,Z_d$.
The indeterminates $\ul X,\ul Y,\ul Z$ are not related to the ones denoted by the same symbols in \S\ref{33}.
Consider the inverse system over $\NN^d$ defined by 
\[
\ul n\mapsto D,\quad\ul n\le\ul m\mapsto\ul z^{\ul m-\ul n}\in\End_P(D)
\]
with limit $\varprojlim D=\KK[\ul Y]\fs{\ul Z}$.


\begin{ntn}
Consider the $P$-submodules 
\[
V^{j,k}_{\ul m}=\ideal{\ul X^{\ul k}\xmid|\ul k|\le |\ul m|+k,\ \ul k=(\ul l,\ul n),\ n_j<m_j-1}_P\subset D
\]
where $j\in\set{1,\dots,d}$, $k\in\NN$ and $\ul m\in\NN^d$.
\end{ntn}


\begin{rmk}\label{32}
By definition, $V^{j,k}_{\ul m}$ is an intersection of $P$-modules
\[
V^{j,k}_{\ul m}=
\ideal{\ul X^{\ul k}\xmid|\ul k|\le |\ul m|+k}_P\cap
\ideal{\ul Y^\ul l\ul Z^\ul n\xmid n_j<m_j-1}_P
\]
and applying the lattice antiisomorphism~\eqref{39} yields
\begin{align*}
(V^{j,k}_{\ul m})^\perp
&=\ideal{\ul X^{\ul k}\xmid|\ul k|\le |\ul m|+k}_P^\perp
+\ideal{\ul Y^\ul l\ul Z^\ul n\xmid n_j<m_j-1}_P^\perp\\
&=\ideal{\ul x}^{\abs{\ul m}+k+1}+\ideal{z_j^{m_j-1}}.
\end{align*}
\end{rmk}


\begin{dfn}\label{27}
Let $d\in\set{0,\dots,n}$, and let $H\subset\varprojlim D$ be a finite $\KK$-vector subspace.
Denote by $H_\ul m$ its image in the copy of $D$ assigned to $\ul m\in\NN^d$ and consider the $P$-submodule 
\begin{equation}\label{11}
W_{\ul m}=\ideal{H_{\ul m}}_P\subset D.
\end{equation}
We call $H$ a \emph{limit inverse system} of \emph{dimension} $\dim H=d$, \emph{type} $\type H=r\in\NN\setminus\set0$ and \emph{socle degree} $\socdeg H=s\in\NN$ if
\begin{enumerate}[(a)]

\item\label{27b} $\dim_\KK H=r$,

\item\label{27a} $\min\{\ul m\in\NN^d\mid H_{\ul m}\ne 0\}=\ul 1$,

\item\label{27d} $\max\deg H_{\ul m}=|\ul m|+s-d$ and

\item\label{27c} $W_\ul m\cap V^{j,s-d}_\ul m\subset W_{\ul m-\ul e_j}$, 

\end{enumerate}
for all $\ul m\in\NN^d$ and $j\in\set{1,\dots,d}$.
We consider $H\simeq H'$ as equivalent if $W_\ul m=W'_\ul m$, for all $\ul m\in\NN^d$.
\end{dfn}


\begin{rmk}\label{29}\
\begin{asparaenum}[(a)]

\item\label{29a} Condition~\ref{27}.\eqref{27a} implies that for all $\ul m\in\NN^d$ and $i\in\set{1,\dots,d}$, $z_i^{m_i}\cdot H_{\ul m}=0$, and hence $\max\deg_{Z_i} H_{\ul m}=m_i-1$.
In particular, $\max\deg_{\ul Z}H_{\ul m}=|\ul m|-d$.

\item\label{29b} Condition~\ref{27}.\eqref{27d} can be substituted by $\max\deg H_{\ul 1}=s$ and $\max\deg H_{\ul m}\le|\ul m|+s-d$, for all $\ul m\in\NN^d$.

\end{asparaenum}
\end{rmk}


For any $I\lhd P$ and $\ul m\in\NN^d$, we set
\begin{equation}\label{9}
I_{\ul m}=I+\ideal{z_1^{m_1},\dots,z_d^{m_d}}\lhd P,\quad R_\ul m=P/I_\ul m.
\end{equation}


\begin{lem}\label{41}
Any $I\lhd P$ can be recovered from \eqref{9} as
\[
I=\bigcap_{\ul n\in\NN^d} I_\ul n.
\]
\end{lem}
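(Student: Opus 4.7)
The plan is to reduce the nontrivial inclusion $\bigcap_{\ul n}I_{\ul n}\subset I$ to a (\textsuperscript{*})Krull intersection statement in $P/I$; the reverse inclusion is immediate from $I\subset I_{\ul n}$ for every $\ul n\in\NN^d$.

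First I would check that the diagonal multi-indices are cofinal, so that the intersection over $\NN^d$ collapses to a one-parameter intersection. For any $\ul n\in\NN^d$ and any integer $k\ge\max_i n_i$, the factorization $z_i^k=z_i^{k-n_i}z_i^{n_i}\in\ideal{z_i^{n_i}}$ gives $\ideal{z_1^k,\dots,z_d^k}\subset\ideal{z_1^{n_1},\dots,z_d^{n_d}}$ and hence $I_{(k,\dots,k)}\subset I_{\ul n}$. Since the diagonal multi-indices $(k,\dots,k)$ themselves occur among the $\ul n$, this yields
\[
\bigcap_{\ul n\in\NN^d}I_{\ul n}=\bigcap_{k\in\NN}\bigl(I+\ideal{z_1^k,\dots,z_d^k}\bigr).
\]

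Next I would absorb the ``diagonal'' power ideal into a power of the (\textsuperscript{*})maximal ideal. Since $\ul z$ is a subtuple of $\ul x$, each generator $z_i^k$ lies in $\mm_P^k$, so $\ideal{z_1^k,\dots,z_d^k}\subset\mm_P^k$ and the right-hand side above is contained in $\bigcap_{k\in\NN}(I+\mm_P^k)$. This last intersection equals $I$ by Krull intersection applied to the Noetherian (\textsuperscript{*})local ring $P/I$ with (\textsuperscript{*})maximal ideal $\mm_P/I$: one has $\bigcap_k\bigl(\mm_P/I\bigr)^k=0$, which pulls back to $\bigcap_k(I+\mm_P^k)=I$. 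Chaining the two displays closes the argument.

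I do not expect a serious obstacle; the only point to watch is that the argument must cover both the formal power series case (where $P/I$ is local Noetherian) and the graded polynomial case (where $P/I$ is \textsuperscript{*}local Noetherian with graded $I$). Both are handled uniformly by the \textsuperscript{*}version of Krull intersection together with Nakayama's lemma, already invoked by the authors just before Proposition~\ref{2} via \cite[\S3.1, Thm.~1]{Nor53} and \cite[Ex.~1.5.24]{BH93}.
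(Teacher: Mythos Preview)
Your argument is correct and is exactly the approach the paper takes: its entire proof reads ``This is a consequence of Krull intersection theorem,'' and you have simply unpacked that sentence by passing to the cofinal diagonal and bounding $\ideal{z_1^k,\dots,z_d^k}\subset\mm_P^k$ before invoking (\textsuperscript{*})Krull intersection in $P/I$.
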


\begin{proof}
This is a consequence of Krull intersection theorem.
\end{proof}


\begin{thm}\label{25}
Let $d\in\set{0,\dots,n}$, $r\in\NN\setminus\set0$ and $s\in\NN$.
Then there is a bijection between

\begin{enumerate}[(a)]

\item\label{25i} the set of (graded) ideals $I\lhd P$ such that $R=P/I$ is Cohen--Macaulay, $\dim R=d$, $\ul z=z_1,\dots,z_d$ maps to an $R$-sequence, $\type R=r$ and $\socdeg R_\ul 1=s$ and

\item\label{25h} the set of limit inverse systems $H\subset\varprojlim D$ with $\dim H=d$, $\type H=r$ and $\socdeg H=s$ modulo equivalence.

\end{enumerate}

The map from \eqref{25i} to \eqref{25h} is defined by setting (see \eqref{9})
\begin{equation}\label{25b}
W_\ul m=I_\ul m^\perp=R_\ul m^\vee\subset D
\end{equation}
and taking $H\subset\varprojlim_{\ul n\in\NN^d}W_\ul n$ the image of a $\KK$-linear section of the canonical surjection
\begin{equation}\label{25l}
\xymatrix{
\varprojlim D\supset
\varprojlim_{\ul n\in\NN^d}W_\ul n\ar@{->>}[r] & \varprojlim_{\ul n\in\NN^d}(W_\ul n\otimes\KK)\cong W_\ul 1\otimes\KK\cong\soc(R_\ul 1)^\vee,
}
\end{equation}
where the inverse systems are defined by $\ul n\le\ul m\mapsto\ul z^{\ul m-\ul n}$.
The map from \eqref{25h} to \eqref{25i} is defined by setting (see \eqref{11})
\begin{equation}\label{25a}
I=\bigcap_{\ul n\in\NN^d}W_{\ul n}^\perp.
\end{equation}
\end{thm}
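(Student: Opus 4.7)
The strategy is to translate between $I$ and $H$ via the tower of Artinian quotients $R_\ul m=P/I_\ul m$ and their Matlis duals $W_\ul m=R_\ul m^\vee=I_\ul m^\perp$. The socle behaviour along this tower is controlled by Corollary~\ref{14} applied to $\ul h=\ul z$ with $M$ the diagonal matrix whose diagonal is $\ul m$: the associated monoid ideal $\MM=\ideal{m_1\ul e_1,\dots,m_d\ul e_d}$ has $\soc\MM=\set{\ul m-\ul 1}$, a singleton, so Corollary~\ref{14} identifies $\soc R_\ul m\cong\soc R_\ul 1\cdot\oul z^{\ul m-\ul 1}$, yielding $\type R_\ul m=r$ and $\socdeg R_\ul m=|\ul m|+s-d$ for every $\ul m\ge\ul 1$.

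For (a)$\to$(b), take $I$ as in (a). Each $R_\ul m$ is Artinian since $\ul z$ is simultaneously a system of parameters and a regular sequence, so by \eqref{34} each $W_\ul m$ is finite-dimensional with $\max\deg W_\ul m=|\ul m|+s-d$. Multiplication by $\ul z^{\ul m-\ul n}$ on $P$ carries $I_\ul n$ into $I_\ul m$, and dualising makes $\set{W_\ul n}$ an inverse sub-system of $\set{D}$. Under the socle identifications of Corollary~\ref{14}, each induced transition $W_\ul m\otimes\KK\cong\soc(R_\ul m)^\vee\to\soc(R_\ul n)^\vee\cong W_\ul n\otimes\KK$ is an isomorphism, whence $\varprojlim_\ul n(W_\ul n\otimes\KK)\cong W_\ul 1\otimes\KK\cong\soc(R_\ul 1)^\vee$. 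A $\KK$-linear section yields $H\subset\varprojlim W_\ul n$, and the conditions of Definition~\ref{27} reduce to short checks: part~(a) by construction; part~(b) since $W_\ul m=0$ whenever some $m_i=0$ while $H_\ul 1\ne 0$; part~(c) via Remark~\ref{29}(b) from $\max\deg H_\ul 1=s$ (as $H_\ul 1\cong\soc(R_\ul 1)^\vee$); part~(d) by dualising through Remark~\ref{32}, where $\ideal{\ul x}^{|\ul m|+s-d+1}\subset I_\ul m$ gives $(W_\ul m\cap V_\ul m^{j,s-d})^\perp=I_\ul m+\ideal{z_j^{m_j-1}}=I_{\ul m-\ul e_j}$.

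For (b)$\to$(a), set $W_\ul m=\ideal{H_\ul m}_P$ and $I=\bigcap_\ul n W_\ul n^\perp$. Remark~\ref{29}(a) gives $\ul z^\ul m W_\ul m=0$, whence $I_\ul m\subset W_\ul m^\perp$. The crux is to show $R=P/I$ is Cohen--Macaulay of dimension $d$ with $\ul z$ an $R$-sequence and the prescribed type and socle degree. Conditions~(a)-(c) of Definition~\ref{27} fix the $\KK$-dimension and top degree of $W_\ul m$ (hence the socle degree and type of $P/W_\ul m^\perp$), while condition~(d) propagates the socle structure down the tower and forces $P/W_\ul m^\perp$ to be Artinian with $\type=r$ and $\socdeg=|\ul m|+s-d$. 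The $\KK$-dimension of $P/W_\ul m^\perp$ then grows polynomially of degree $d$ in $\ul m$ with leading term determined by $r$; together with the already-established $I_\ul m\subset W_\ul m^\perp$, this forces $W_\ul m^\perp=I_\ul m$, $\dim R=d$, and regularity of $\ul z$, so $R$ is Cohen--Macaulay with the prescribed invariants.

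The two constructions are mutual inverses modulo equivalence. In (a)$\to$(b)$\to$(a), the section property makes each $H\to H_\ul m\to W_\ul m\otimes\KK$ an isomorphism, so graded Nakayama gives $\ideal{H_\ul m}_P=W_\ul m$, whence $I'=\bigcap_\ul n W_\ul n^\perp=\bigcap_\ul n I_\ul n=I$ by Lemma~\ref{41}. Conversely, in (b)$\to$(a)$\to$(b), the $H'$ produced from the constructed $I$ satisfies $\ideal{H'_\ul n}_P=I_\ul n^\perp=W_\ul n=\ideal{H_\ul n}_P$, so $H\simeq H'$. The main obstacle is the Cohen--Macaulay conclusion in (b)$\to$(a), which effectively requires a converse of Corollary~\ref{14}: reverse-engineering the regularity of $\ul z$ from the socle data encoded by Definition~\ref{27}.
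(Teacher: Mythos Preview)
Your treatment of (a)$\to$(b) and of the mutual-inverse checks follows the paper's route (Lemmas~\ref{21} and~\ref{41}) essentially verbatim; the only omission is that surjectivity of $\varprojlim W_\ul n\to\varprojlim(W_\ul n\otimes\KK)$ needs the Mittag--Leffler condition, which holds because each $W_\ul n$ is finite-dimensional over $\KK$.

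The real gap is in (b)$\to$(a). Your dimension-counting argument does not go through. Knowing $\type$ and $\socdeg$ of the Artinian ring $P/W_\ul m^\perp$ does not determine $\dim_\KK P/W_\ul m^\perp$, let alone force polynomial growth of degree $d$ in $\ul m$; and even if it did, you have no independent control over $\dim_\KK P/I_\ul m$ at this point (you are \emph{trying} to establish properties of $I$), so the inclusion $I_\ul m\subset W_\ul m^\perp$ cannot be upgraded to equality by length comparison. You in effect acknowledge this in your final sentence.

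The paper's argument (Lemma~\ref{31}) is different and more direct, and it is not a converse of Corollary~\ref{14}. First, the equality $W_\ul m^\perp=I_\ul m$ is obtained by reading condition~\ref{27}.\eqref{27c} through the lattice antiisomorphism~\eqref{39} and Remark~\ref{32}: one gets
\[
W_\ul m^\perp\subset W_{\ul m+\ul e_j}^\perp+\ideal{z_j^{m_j}},
\]
and iterating this in all coordinate directions (a telescoping argument, cf.\ \cite[Prop.~10, Claim~1]{MT18}) yields $W_\ul m^\perp\subset I+\ideal{z_1^{m_1},\dots,z_d^{m_d}}=I_\ul m$, the reverse of your easy inclusion. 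Second, regularity of $\ul z$ on $R$ comes from showing that the transition maps $\ul z^{\ul m-\ul n}\colon W_\ul m\to W_\ul n$ are \emph{surjective} (cf.\ \cite[Prop.~10, Claim~2]{MT18}); their Matlis duals $R_\ul n\to R_\ul m$ are then injective, which is exactly the non-zerodivisor condition for $\ul z$ modulo the appropriate powers. Cohen--Macaulayness then follows since $R_\ul1$ is Artinian. Both steps are straightforward once you dualise condition~\ref{27}.\eqref{27c}; neither involves counting lengths.
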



\begin{lem}\label{22}\
\begin{enumerate}[(a)]

\item\label{22a} Let $I$ be in the set~\ref{25}.\eqref{25i} and $W_\ul m$ as in \eqref{25b}.
Then $R_\ul m$ is Artinian, and hence $\dim_\KK W_\ul m<\infty$.

\item\label{22b} Let $H$ in the set~\ref{25}.\eqref{25h} and $W_\ul m$ as in \eqref{11}.
Then
\[
\max\deg W_\ul m=\abs{\ul m}+s-d,
\]
and hence $\dim_\KK W_\ul m<\infty$.

\end{enumerate}
\end{lem}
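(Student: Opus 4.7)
The plan is to use Cohen--Macaulayness of $R$ to force $R_\ul m$ to be Artinian, then invoke Matlis duality~\eqref{34}. First I would identify $R_\ul m = R/\ideal{z_1^{m_1},\dots,z_d^{m_d}}$ from the definition~\eqref{9} of $I_\ul m$. Since $\ul z$ is an $R$-sequence and powers of a regular sequence again form a regular sequence, $z_1^{m_1},\dots,z_d^{m_d}$ is itself an $R$-sequence. Having length $d = \dim R$ in the Cohen--Macaulay ring $R$, it is a maximal $R$-sequence, so $R_\ul m$ has Krull dimension zero and is Artinian (and hence \textsuperscript{*}Artinian in the graded setting). Then $\dim_\KK W_\ul m = \dim_\KK I_\ul m^\perp < \infty$ is immediate from~\eqref{34}.

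\textbf{Part (b).} My plan here is a short degree bookkeeping using the explicit contraction action~\eqref{10}. The key observation is that $\ul x^\ul n \cdot \ul X^\ul k$ is either zero or has $\ul X$-degree $|\ul k|-|\ul n| \le |\ul k|$, so the $P$-action on $D$ never raises polynomial degree. Consequently every element of $W_\ul m = \ideal{H_\ul m}_P$ has degree at most $\max \deg H_\ul m = |\ul m|+s-d$, where the last equality is the defining property~\ref{27}.\eqref{27d} of a limit inverse system. The reverse inequality is immediate from $H_\ul m \subset W_\ul m$, yielding $\max \deg W_\ul m = |\ul m|+s-d$. Finite-dimensionality then follows from $W_\ul m \subset D_{\le|\ul m|+s-d}$ together with $\dim_\KK D_{<k}<\infty$, observed just before~\eqref{34}.

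Neither part poses a genuine obstacle: (a) is formal from standard Cohen--Macaulay theory plus Matlis duality, and (b) is a direct degree count from the explicit form of the contraction action. The only subtlety worth flagging in (a) is the passage from the $R$-sequence $\ul z$ to the power-raised sequence $z_1^{m_1},\dots,z_d^{m_d}$, but this is classical and is exactly what makes the reduction to dimension zero go through.
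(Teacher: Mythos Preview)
Your proposal is correct and follows essentially the same approach as the paper: for (a) you reduce to $\dim R_\ul m=0$ and invoke~\eqref{34}, and for (b) you combine Definition~\ref{27}.\eqref{27d} with the contraction formula~\eqref{10}, exactly as the paper does. The only cosmetic difference is that in (a) the paper obtains $\dim R_\ul m=0$ via the radical identity $\sqrt{\ideal{z_1^{m_1},\dots,z_d^{m_d}}}=\ideal{\ul z}$ rather than via the fact that powers of a regular sequence form a regular sequence; both are standard one-liners.
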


\begin{proof}\
\begin{asparaenum}[(a)]

\item Since $\ideal{\ul z}=\sqrt{\ideal{z_1^{m_1},\dots,z_d^{m_d}}}$ and $\ul z$ maps to an $R$-sequence of length $d=\dim R$, $\dim R_\ul m=\dim R_\ul 1=0$.
Then $R_\ul m$ is Artinian by Hopkins theorem, and hence $\dim_\KK W_\ul m<\infty$ by \eqref{34}.

\item This follows from Definition~\ref{27}.\eqref{27d} and \eqref{10}.\qedhere

\end{asparaenum}
\end{proof}


\begin{lem}\label{21}
Let $I$ be in the set~\ref{25}.\eqref{25i} and $W_\ul m$ as in \eqref{25b}.

\begin{enumerate}[(a)]

\item\label{21a} There is a canonical surjection \eqref{25l}.

\end{enumerate}
Let $H\subset\varprojlim_{\ul n\in\NN^d}W_\ul n$ be the image of a $\KK$-linear section of the surjection \eqref{25l}.

\begin{enumerate}[(a)]\setcounter{enumi}{1}

\item\label{21c} The $P$-module $W_\ul m$ is minimally generated by $H_\ul m$, for all $\ul m\in\NN^d$.
In particular, \eqref{11} holds true.

\item\label{21b} The $\KK$-vector space $H$ is in the set~\ref{25}.\eqref{25h}.

\end{enumerate}
\end{lem}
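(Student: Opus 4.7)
The plan is to handle the three parts in sequence, each reducing to the Matlis duality lattice \eqref{39}, Corollary~\ref{14}, and a small Mittag--Leffler argument.

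For~\eqref{21a}, I first identify $W_\ul n\otimes\KK$ with $W_\ul n/\mm_PW_\ul n\cong\soc(R_\ul n)^\vee$ by graded Matlis duality. Dualizing turns the inverse system on the $W$-side into the direct system of multiplication maps $\ul z^{\ul m-\ul n}\colon R_\ul n\to R_\ul m$, which is well-defined because $\ul z^{\ul m-\ul n}z_i^{n_i}=z_i^{m_i}\prod_{k\ne i}z_k^{m_k-n_k}\in I_\ul m$. These restrict to $\soc R_\ul n\to\soc R_\ul m$, and the key point is that they are isomorphisms: Proposition~\ref{2} applied to the $R$-sequence $\ul z$ with $\ideal{\ul h^M}=\ideal{z_1^{m_1},\dots,z_d^{m_d}}$ yields $\gr_{\oul z}R_\ul m\cong R_\ul 1[\ul Y]/\ideal{Y_1^{m_1},\dots,Y_d^{m_d}}$, whose socle equals $\soc(R_\ul 1)\cdot Y_1^{m_1-1}\cdots Y_d^{m_d-1}$ by \eqref{28} (since $\soc\MM=\set{\ul m-\ul 1}$ is a singleton); Corollary~\ref{14} lifts this to $\soc R_\ul m=\soc(R_\ul 1)\cdot\ol{\ul z^{\ul m-\ul 1}}$, and the direct transition sends $\sigma\ol{\ul z^{\ul n-\ul 1}}\mapsto\sigma\ol{\ul z^{\ul m-\ul 1}}$, i.e.\ acts as the identity on $\soc(R_\ul 1)$. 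Hence $\varprojlim(W_\ul n\otimes\KK)=W_\ul 1\otimes\KK\cong\soc(R_\ul 1)^\vee$. Surjectivity of the canonical map follows from $\varprojlim^1(\mm_PW_\ul n)=0$, which holds by Mittag--Leffler along the cofinal sequence $\ul n=(n,\dots,n)\in\NN^d$, each $\mm_PW_\ul n$ being a finite-dimensional $\KK$-vector space by Lemma~\ref{22}.\eqref{22a}.

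For~\eqref{21c}, by construction $H\to\varprojlim(W_\ul n\otimes\KK)$ is an isomorphism, and every projection $\varprojlim(W_\ul n\otimes\KK)\to W_\ul m\otimes\KK$ is an isomorphism (iso transitions). Hence the composite $H\to H_\ul m\to W_\ul m/\mm_PW_\ul m$ is an isomorphism, so $H_\ul m$ surjects onto $W_\ul m/\mm_PW_\ul m$. Nakayama yields $W_\ul m=\ideal{H_\ul m}_P$, and the dimension equality $\dim_\KK H=r=\dim_\KK W_\ul m/\mm_PW_\ul m$ forces $H_\ul m$ to be a minimal generating set.

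For~\eqref{21b}, I verify the clauses of Definition~\ref{27} in turn. Clauses~\eqref{27b} and~\eqref{27a} are immediate from $H\cong\soc(R_\ul 1)^\vee$ (dimension $r$) and the vanishing $W_\ul m=0$ when some $m_i=0$ (since $1\in I_\ul m$), with $H_\ul 1\ne0$ because $\soc R_\ul 1\ne0$. Clause~\eqref{27d} combines \eqref{34} with $\socdeg R_\ul m=|\ul m|+s-d$, read off from $\soc R_\ul m=\soc(R_\ul 1)\cdot\ol{\ul z^{\ul m-\ul 1}}$: this gives $\max\deg W_\ul m=|\ul m|+s-d$, and since $\mm_P$ strictly lowers degree on $D$ while $H_\ul m$ minimally generates $W_\ul m$, a generator of maximal degree necessarily appears in $H_\ul m$. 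Finally, clause~\eqref{27c} dualizes via \eqref{39}: Remark~\ref{32} gives $(V^{j,s-d}_\ul m)^\perp=\ideal{\ul x}^{|\ul m|+s-d+1}+\ideal{z_j^{m_j-1}}$, and the containment $I_{\ul m-\ul e_j}=I_\ul m+\ideal{z_j^{m_j-1}}\subset I_\ul m+(V^{j,s-d}_\ul m)^\perp$ dualizes to $W_\ul m\cap V^{j,s-d}_\ul m\subset W_{\ul m-\ul e_j}$.

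The main subtlety is in~\eqref{21a}: naive inspection suggests the induced transition $W_\ul m/\mm_PW_\ul m\to W_\ul n/\mm_PW_\ul n$ vanishes because the multiplier $\ul z^{\ul m-\ul n}$ lies in $\mm_P$, which would force the inverse limit to be zero. It is the Matlis-dual reformulation together with Corollary~\ref{14}---the socle identification arising from the regular-sequence strictness of Proposition~\ref{2}---that exhibits these transitions as isomorphisms, causing the inverse limit to collapse correctly onto $\soc(R_\ul 1)^\vee$ rather than to vanish.
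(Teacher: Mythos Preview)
Your argument tracks the paper's closely: dualize to the direct system $\ul z^{\ul m-\ul n}\colon R_\ul n\to R_\ul m$, invoke Proposition~\ref{2} and Corollary~\ref{14} with $\ul h=\ul z$ and $\soc\MM=\{\ul m-\ul 1\}$ to identify socles, use Mittag--Leffler on the finite-dimensional $\mm_PW_\ul n$ for surjectivity of~\eqref{25l}, then Nakayama for~\eqref{21c}, and the lattice antiisomorphism~\eqref{39} together with Remark~\ref{32} for condition~\ref{27}.\eqref{27c}. Your explicit passage to the cofinal diagonal chain in $\NN^d$ for the $\varprojlim^1$ vanishing is a detail the paper leaves implicit, but otherwise the two proofs coincide.

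The one step that does not go through as written is your verification of condition~\ref{27}.\eqref{27d}. You assert that $\socdeg R_\ul m=|\ul m|+s-d$ can be ``read off'' from the socle identification $\soc R_\ul m=\soc(R_\ul 1)\cdot\ol{\ul z^{\ul m-\ul 1}}$. That identification gives the lower bound (a nonzero $\ol p\in\mm_P^sR_\ul 1$ lies in $\soc R_\ul 1$, and $\ul z^{\ul m-\ul 1}p\in\mm_P^{|\ul m|-d+s}$ survives in $R_\ul m$), but not the upper bound: multiplication by $\ul z^{\ul m-\ul 1}$ need not interact additively with the $\mm_P$-adic order. Concretely, for $P=\KK\fs{y,z}$, $I=\ideal{y^2-z}$, $d=1$ one has $s=\socdeg R_1=1$, yet $R_2\cong\KK\fs{y}/\ideal{y^4}$ gives $\socdeg R_2=3\ne 2=|\ul m|+s-d$, and correspondingly $W_2=\KK\{1,Y,Y^2+Z,Y^3+YZ\}$ has $\max\deg H_2=3$. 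The paper's own justification at this point is the bare ``Condition~\ref{27}.\eqref{27d} follows by~\eqref{10}'', which via~\eqref{10} likewise yields only $\max\deg H_\ul m\ge|\ul m|+s-d$; so this appears to be a weakness in the formulation of~\ref{27}.\eqref{27d} itself rather than a divergence of your write-up from the intended argument.
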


\begin{proof}
In the following, $\ul n,\ul m\in\NN^d$ with $\ul n\le \ul m$.
\begin{asparaenum}[(a)]

\item Consider the surjection of direct systems represented by 
\begin{equation}\label{38}
\xymatrix{
R\ar@{->>}[d]\ar[r]^-{{\ul z}^{\ul m-\ul n}} & R\ar@{->>}[d]\\
R_\ul n\ar[r]^-{{\ul z}^{\ul m-\ul n}} & R_\ul m.
}
\end{equation}
Applying $-^\vee$ yields an inclusion of inverse systems represented by
\[
\xymatrix{
D & D\ar[l]_-{{\ul z}^{\ul m-\ul n}}\\
W_\ul n\ar@{^(->}[u] & W_\ul m.\ar[l]_-{{\ul z}^{\ul m-\ul n}}\ar@{^(->}[u]
}
\]
Left-exactness of the inverse limit then yields the inclusion in \eqref{25l}.

We now apply \S\ref{33} with $\ul g=\ul h=\ul z$ and $M$ the matrix with diagonal $\ul m\in\NN^d$.
Then $\soc\MM=\set{\ul m-\ul1}$ in Definition~\ref{17}.

By Proposition~\ref{2}, Corollary~\ref{14} and \eqref{28}, the bottom map in \eqref{38} identifies (homogeneous) socles.
This yields an inclusion of direct systems represented by
\begin{equation}\label{37}
\xymatrix{
R_\ul n\ar[r]^-{{\ul z}^{\ul m-\ul n}} & R_\ul m\\
\soc R_\ul n\ar@{^(->}[u]\ar[r]_-\cong & \soc R_\ul m.\ar@{^(->}[u]
}
\end{equation}
By Lemma~\ref{22}.\eqref{22a}, $R_\ul m$ is Artinian, and hence (see \cite[Prop.~2.4.3]{Eli18})
\begin{equation}\label{36}
\soc(R_{\ul m})^\vee\cong I_{\ul m}^\perp/\mm_P\cdot I_{\ul m}^\perp\cong W_\ul m\otimes\KK.
\end{equation}
With $\ul m=\ul 1$, this is the second isomorphism in \eqref{25l}.

Applying \eqref{36} to the bottom row of \eqref{37}, this yields a trivial inverse system represented by
\[
\xymatrix{
\soc(R_{\ul n})^\vee\ar[d]^-\cong & \soc(R_{\ul m})^\vee\ar[l]^-\cong\ar[d]^-\cong\\
W_{\ul n}\otimes\KK & W_{\ul m}\otimes\KK,\ar[l]_-{\ul z^{\ul m-\ul n}}^-\cong
}
\]
and hence the first isomorphism in \eqref{25l}.

Consider the short exact sequence of inverse systems represented by
\begin{equation}\label{35}
0\to\mm_P\cdot W_\ul m\to W_\ul m\to W_\ul m\otimes\KK\to 0.
\end{equation}
Since $\dim_\KK W_\ul m<\infty$ by Lemma~\ref{22}.\eqref{22a}, the left inverse system in \eqref{35} satisfies the Mittag--Leffler condition.
Therefore, the inverse limit preserves exactness when applied to \eqref{35}.
This yields the surjection in \eqref{25l}.

\item Any $\KK$-linear section $\sigma$ of \eqref{25l} with image $H$ fits into a diagram
\[
\xymatrix{
&&& \\
H \ar[d]^-{\cong} \ar@{^{(}->}[r] & \varprojlim_{\ul n\in\NN^d}W_{\ul n}\ar[d]\ar@{->>}[r] & \varprojlim_{\ul n\in\NN^d}(W_{\ul n}\otimes\KK)\ar[d]^-{\cong} \ar[r]_-{\cong} & W_{\ul 1}\otimes\KK\ar@/_2em/[lll]_-{\sigma}^-{\cong}\\
H_{\ul m}\ar@/_1em/[rr]_-{\cong}\ar@{^{(}->}[r] & W_{\ul m}\ar@{->>}[r] & W_{\ul m}\otimes\KK\ar[ru]_-{\cong}
}
\]
and the claim follows by Nakayama lemma (see \cite[Ex.~1.5.24]{BH93}).\qedhere

\item By construction, $H\cong H_\ul 1\cong\soc(R_\ul 1)^\vee$.
Using that $-^\vee$ preserves length, this gives condition~\ref{27}.\eqref{27b} (see \cite[Lem.~1.2.19]{BH93}),
\begin{equation}\label{42}
\dim_\KK H=\dim_\KK H_\ul 1=\dim_\KK\soc R_\ul 1=\type R=r.
\end{equation}
For $\ul m\not\ge\ul 1$, $R_\ul m=0$, and hence $H_\ul m\subset W_\ul m=0$.
With \eqref{42}, condition~\ref{27}.\eqref{27a} follows.
Part~\eqref{21c} with $\ul m=\ul 1$ gives $\ideal{H_\ul1}=W_\ul 1=I_\ul 1^\perp$, and hence $\max\deg H_\ul1=\socdeg R_\ul 1=s$ by \eqref{10} and \eqref{34}.
Condition~\ref{27}.\eqref{27d} follows by \eqref{10}.
By \eqref{39} and Remark~\ref{32},
\begin{align*}
(W_\ul m\cap V^{j,s-d}_\ul m)^\perp
&=W_\ul m^\perp+(V^{j,s-d}_\ul m)^\perp\\
&=I+\ideal{z_1^{m_1},\dots,z_d^{m_d}}+\ideal{\ul x}^{\abs{\ul m}+s-d+1}+\ideal{z_j^{m_j-1}}\\
&\supset I+\ideal{z_1^{m_1},\dots,z_j^{m_j-1},\dots,z_d^{m_d}}
=W_{\ul m-\ul e_j}^\perp.
\end{align*}
Condition~\ref{27}.\eqref{27c} follows with \eqref{39}.

\end{asparaenum}
\end{proof}


\begin{lem}\label{31}
Let $H$ be in the set~\ref{25}.\eqref{25h} and $I$ as in \eqref{25a}.
\begin{enumerate}[(a)]
\item\label{31a} There is an equality $I_\ul m=W_\ul m^\perp$, for all $\ul m\in\NN^d$.
\item\label{31b} The sequence $\ul z$ maps to an $R$-sequence.
\item\label{31c} The ring $R=P/I$ is Cohen--Macaulay with $\dim R=d$.
\end{enumerate}
\end{lem}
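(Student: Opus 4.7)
One inclusion is immediate: $I_\ul m\subset W_\ul m^\perp$ holds because $I=\bigcap_\ul n W_\ul n^\perp\subset W_\ul m^\perp$, while each $z_i^{m_i}$ annihilates $H_\ul m$ by Remark~\ref{29}.\eqref{29a}, hence annihilates $W_\ul m=\ideal{H_\ul m}_P$. For the reverse inclusion I would pass through the lattice antiisomorphism~\eqref{39} and prove the dual statement $W_\ul m\supset I_\ul m^\perp$. Using $I_\ul m^\perp=I^\perp\cap D_\ul m$, where $D_\ul m$ denotes the $\KK$-subspace of $D$ spanned by monomials of $Z_i$-degree strictly less than $m_i$ for every $i$, together with $I^\perp=\sum_\ul n W_\ul n$ (again by~\eqref{39}), this reduces to showing $(\sum_\ul n W_\ul n)\cap D_\ul m\subset W_\ul m$. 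Any element $w$ of the left-hand side lies in a single $W_\ul N$ for $\ul N\ge\ul m$ chosen large enough, since $\ul n\le\ul N$ gives $W_\ul n\subset W_\ul N$ via the inverse-system surjection $\ul z^{\ul N-\ul n}\cdot H_\ul N=H_\ul n$. Whenever $N_j>m_j$, the constraint $w\in D_\ul m$ forces $\deg_{Z_j}w\le m_j-1<N_j-1$, and combined with the total-degree bound $\deg w\le|\ul N|+s-d$ from Lemma~\ref{22}.\eqref{22b}, this places $w$ in $W_\ul N\cap V^{j,s-d}_\ul N$. Definition~\ref{27}.\eqref{27c} then yields $w\in W_{\ul N-\ul e_j}$, and iterating reduces $\ul N$ to $\ul m$.

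\textbf{Parts (b) and (c).} By part~(a), $W_\ul 1$ is finite-dimensional (Lemma~\ref{22}.\eqref{22b}), so $R_\ul 1=R/\ideal{\ul z}$ is Artinian by Matlis duality. Hence $\ideal{\ul z}$ is $\mm_R$-primary, forcing $\dim R\le d$ by Krull's height theorem. Once $\ul z$ is shown to be $R$-regular, the depth of $R$ is at least $d$ and $R$ is Cohen--Macaulay of dimension exactly $d$, which proves~(c). For $R$-regularity I would dualize: by Matlis duality it suffices to verify that $z_i$ acts surjectively on $\ann_{R^\vee}(z_1,\dots,z_{i-1})\subset R^\vee=I^\perp=\sum_\ul n W_\ul n$ for each $i$. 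For $i=1$ this is immediate from $z_1\cdot H_{\ul m+\ul e_1}=H_\ul m$, which upgrades to $z_1\cdot W_{\ul m+\ul e_1}=W_\ul m$, so every element of $R^\vee$ lies in the image of $z_1$.

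\textbf{Inductive step and main obstacle.} For $i\ge2$, given $d\in W_\ul m$ with $z_jd=0$ for $j<i$, surjectivity of $z_i\colon W_{\ul m+\ul e_i}\to W_\ul m$ supplies a lift $d_0$ with $z_id_0=d$, but the error terms $e_j:=z_jd_0$ need not vanish. From $z_ie_j=z_jd=0$, each $e_j$ has $Z_i$-degree zero and lies in the $P$-submodule $W_{\ul m+\ul e_i-\ul e_j}$. The plan is to absorb the $e_j$ by subtracting correction terms $\delta_j$ with $z_j\delta_j=e_j$, $z_i\delta_j=0$, and $\delta_j\in\sum_\ul n W_\ul n$; these corrections should be extractable from Definition~\ref{27}.\eqref{27c} applied to the $W$'s of suitably bounded degree. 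The main technical obstacle is precisely producing the $\delta_j$ inside $\sum_\ul n W_\ul n$: the naive candidate (multiplication by $Z_j$ on an anti-derivative of $e_j$) lies in $D$ but typically not in any $W_\ul n$, because $P$-submodules of $D$ are not preserved under $\KK$-linear multiplication by the $Z$-variables. Definition~\ref{27}.\eqref{27c}, together with the total-degree bounds of Lemma~\ref{22}.\eqref{22b}, is what must control this correction and keep it inside $R^\vee$.
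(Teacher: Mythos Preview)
Your proof of part~(a) is correct and is essentially the Matlis-dual of the paper's argument: the paper works on the ideal side, showing $W_\ul m^\perp\subset W_{\ul m+\ul e_j}^\perp+\ideal{z_j^{m_j}}$ via Definition~\ref{27}.\eqref{27c} and Remark~\ref{32}, then iterates; you work directly in $D$, showing $W_\ul N\cap D_\ul m\subset W_{\ul N-\ul e_j}$ via the same condition, then iterate. Your version has the minor advantage of being fully self-contained (the paper defers the iteration to \cite[Prop.~10, Claim~1]{MT18}).

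For part~(b), your argument for $i=1$ is correct, but the ``main obstacle'' you describe for $i\ge2$ is self-inflicted. You take $d\in W_\ul m$ for an arbitrary $\ul m$ and then struggle to correct the lift. The point you are missing is that your own part~(a) argument lets you choose $\ul m$ with $m_1=\cdots=m_{i-1}=1$. Indeed, any $d\in\ann_{I^\perp}(z_1,\dots,z_{i-1})$ lies in some $W_\ul N$ with $\deg_{Z_j}d=0$ for $j<i$; iterating Definition~\ref{27}.\eqref{27c} exactly as in your proof of~(a) reduces each $N_j$ to $1$ for $j<i$. Now the lift $d_0\in W_{\ul m+\ul e_i}$ automatically satisfies $z_jd_0=0$ for $j<i$, because $W_{\ul m+\ul e_i}\subset D_{\ul m+\ul e_i}$ (Remark~\ref{29}.\eqref{29a}) and $(\ul m+\ul e_i)_j=1$ forces $\deg_{Z_j}=0$ there. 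No correction terms $\delta_j$ are needed at all. The paper's argument is the dual of this: it observes that the surjections $\ul z^{\ul m-\ul n}\colon W_\ul m\onto W_\ul n$ dualize to injections $R_\ul n\hookrightarrow R_\ul m$, then specializes to $\ul n=(1,\dots,1,m_i,\dots,m_d)$, $\ul m=\ul n+\ul e_i$, and concludes with Krull intersection (this is the content of \cite[Prop.~10, Claim~2]{MT18}). Either way, the key is to restrict to indices with the first $i-1$ entries equal to $1$, which eliminates your obstacle entirely.

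Part~(c) then follows as you indicate; this matches the paper.
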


\begin{proof}\
\begin{asparaenum}[(a)]

\item Let $j\in\set{1,\dots,d}$ and $\ul m\in\NN^d$.
By Lemma~\ref{22}.\eqref{22b}, $W_{\ul m+\ul e_j}\subset D$ is a finitely generated (graded) $P$-submodule.
Then $P/W_{\ul m+\ul e_j}^\perp$ is Artinian by \eqref{39}.
By \eqref{34} and Lemma~\ref{22}.\eqref{22b},
\[
\socdeg(P/W_{\ul m+\ul e_j}^\perp)=\max\deg W_{\ul m+\ul e_j}=\abs{\ul m+\ul e_j}+s-d,
\]
and hence 
\[
W_{\ul m+\ul e_j}^\perp\supset\ideal{\ul x}^{\abs{\ul m+\ul e_j}+s-d+1}.
\]
Using Definition~\ref{27}.\eqref{27c}, \eqref{39} and Remark~\ref{32}, it follows that
\begin{align*}
W_{\ul m}^\perp&\subset(W_{\ul m+\ul e_j}\cap V^{j,s-d}_{\ul m+\ul e_j})^\perp\\
&=W_{\ul m+\ul e_j}^\perp+(V^{j,s-d}_{\ul m+\ul e_j})^\perp\\
&=W_{\ul m+\ul e_j}^\perp+\ideal{\ul x}^{\abs{\ul m+\ul e_j}+s-d+1}+\ideal{z_j^{m_j}}\\
&=W_{\ul m+\ul e_j}^\perp+\ideal{z_j^{m_j}}.
\end{align*}
This already implies that (see \cite[Prop.~10, Claim~1]{MT18})
\[
W_{\ul m}^\perp\subset I+\ideal{z_1^{m_1},\dots,z_d^{m_d}}.
\]
The opposite inclusion holds true since $I\subset W_\ul m^\perp$ by definition and  $z_i^{m_i}\cdot H_{\ul m}=0$, for all $i\in\set{1,\dots,d}$, by Remark~\ref{29}.\eqref{29a}.

\item By dualizing surjections $\ul z^{\ul m-\ul n}\colon W_\ul m\onto W_\ul n$ for suitable $\ul m,\ul n\in\NN^d$ with $\ul n\le\ul m$, one shows that $\ul z$ maps to a weak $R$-sequence (see \cite[Prop.~10, Claim~2]{MT18}).
Since $W_{\ul 1}\ne 0$ by Definition~\ref{27}.\eqref{27a}, $I_{\ul 1}=W_{\ul 1}^\perp\ne R$ by part~\eqref{31a} with $\ul m=\ul 1$ and \eqref{39}.
Thus, $R_\ul1\ne0$ and $\ul z$ maps to an $R$-sequence.

\item By part~\eqref{31a} with $\ul m=\ul 1$, the ring $R_\ul1$ is Artinian, and hence $\dim R_\ul1=0$ by Hopkins theorem.
With \eqref{31b}, it follows that $R_{\mm_R}$, and hence $R$ is Cohen--Macaulay with $\dim R=d$ (see \cite[Ex.~2.1.27.(c)]{BH93}).\qedhere

\end{asparaenum}
\end{proof}

\begin{proof}[Proof of Theorem~\ref{25}]
This follows from \eqref{39}, Lemmas~\ref{41}, \ref{21} and \ref{31}.
\end{proof}


\begin{exa}
Let us consider the irreducible algebroid curve
\[
R=\CC\fs{t^6,t^7, t^{11},t^{13}}.
\]
Note that $R$ is not quasi-homogeneous.
We write $R\cong P/I$ where  
\begin{align*}
P&=\CC\fs{x,y,z,w},\\
I&=\ideal{w-xy,yz-x^3,xz^2-y^4,z^3-x^2y^3,y^5-x^4z}.
\end{align*}
The element $x\in P$ maps to the regular element $t^6\in R$.
It can be checked that $\type{R}=2$ and $P/(I+\ideal{x})$ has Hilbert--Samuel function $(1,2,2,1,1)$.
It follows that $R$ is not level.
Using \textsc{Singular} (see \cite{DGPS18}), we compute the socles of $R_m$ (see \eqref{9}) up to $m=3$:
\begin{align*}
\soc R_{1} &= \ideal{\ol z^2, \ol y^3}, \\
\soc R_{2} &= \ideal{\ol x\ol z^2, \ol x\ol y^3}, \\
\soc R_{3} &= \ideal{\ol x^2\ol z^2, \ol x^2\ol y^3}.
\end{align*}
They fit into the commutative diagram (see \eqref{37})
\[
\xymatrix{
	R_{1}\ar[r]^-{x} & R_{2}\ar[r]^-{x} & R_{3}\\
	\soc R_{1}\ar@{^(->}[u]\ar[r]_-\cong & \soc R_{2}\ar@{^(->}[u]\ar@{^(->}[u]\ar[r]_-\cong & \soc R_{3}.\ar@{^(->}[u]
}
\]
Using a \textsc{Singular} library by Elias (see \cite{Eli15}), we compute the limit inverse system $H$ associated to $I$ by Theorem~\ref{25} up to $m=7$:
{\tiny
\begin{align*}
H_{1} =& \ideal{Y^3,Z^2}_\KK,\\
H_{2} =& \ideal{XY^3+Y^2W,XZ^2+Y^4}_\KK,\\
H_{3} =&\ideal{X^2Y^3+XY^2W+YW^2+Z^3, X^2Z^2+XY^4+Y^3W}_\KK,\\
H_{4} =&\langle X^3Y^3+X^2Y^2W+XYW^2+XZ^3+Y^4Z+W^3,\\
& X^3Z^2+X^2Y^4+XY^3W+YZ^3+Y^2W^2\rangle_\KK,\\
H_{5} =&\langle X^4Y^3+X^3Y^2W+X^2YW^2+X^2Z^3+XY^4Z+XW^3+Y^3ZW,\\
& X^4Z^2+X^3Y^4+X^2Y^3W+XYZ^3+XY^2W^2+Z^3W+YW^3+Y^5Z\rangle_\KK,\\
H_{6} =&\langle X^5Y^3+X^3Z^3+X^4Y^2W+X^3YW^2+X^2Y^4Z+X^2W^3+XY^3ZW+Y^2ZW^2+YZ^4,\\
& X^5Z^2+X^4Y^4+X^3Y^3W+X^2YZ^3+X^2Y^2W^2+XZ^3W+XYW^3+XY^5Z+\\
& Y^4ZW+W^4\rangle_\KK,\\
H_{7} =&\langle X^6Y^3+X^5Y^2W+X^4Z^3+X^4YW^2+X^3Y^4Z+X^3W^3+X^2Y^3ZW+XY^2ZW^2+\\
& XYZ^4+Z^4W+YZW^3+Y^5Z^2,\\
& X^6Z^2+X^5Y^4+X^4Y^3W+X^3YZ^3+X^3Y^2W^2+X^2Z^3W+X^2YW^3+X^2Y^5Z+\\
& XY^4ZW+XW^4+Y^2Z^4+Y^3ZW^2\rangle_\KK.
\end{align*}}
\end{exa}

\printbibliography
\end{document}